\newcommand{\CP}{\mathbb{C}P}
\newcommand{\real}{\mathbb{R}}
\newcommand{\lact}{\curvearrowright}
\DeclareMathOperator{\SO}{SO}\DeclareMathOperator{\Orth}{O}
\DeclareMathOperator{\SU}{SU}\DeclareMathOperator{\U}{U}
\DeclareMathOperator{\cn}{cn}\DeclareMathOperator{\sn}{sn}
\DeclareMathOperator{\Vol}{Vol}\DeclareMathOperator{\Ric}{Ric}
\DeclareMathOperator{\tr}{tr}
\DeclareMathOperator{\id}{id}
\DeclareMathOperator{\Isom}{Isom}
\DeclareMathOperator{\Hol}{Hol}
\DeclareMathOperator{\Met}{Met}
\DeclareMathOperator{\Diff}{Diff}
\DeclareMathOperator{\grad}{grad}
\theoremstyle{plain}
\newtheorem{proposition}{Proposition}[section]
\newtheorem{corollary}[proposition]{Corollary}
\newtheorem{lemma}[proposition]{Lemma}
\newtheorem{theorem}[proposition]{Theorem}
\theoremstyle{definition}
\newtheorem{remark}[proposition]{Remark}
\numberwithin{equation}{section}
\author{Nobuhiko Otoba\thanks{
Keio University, 3-14-1 Hiyoshi, Kohoku-ku, Yokohama, Kanagawa 223-8522, Japan
\indent E-mail address: otoba@math.keio.ac.jp}
	\and Jimmy Petean\thanks{CIMAT, A.P. 402, 36000, Guanajuato. Gto., M\'exico
\indent E-mail address: jimmy@cimat.mx}}
\title{Metrics of constant scalar curvature\\ on sphere bundles}
\date{}
\begin{document}

\maketitle


\begin{abstract}
Let $G/H$ be a Riemannian homogeneous space.  
For an orthogonal representation $\phi$ of $H$ on the Euclidean space $\real^{k+1}$, 
there corresponds the vector bundle $E=G\times_{\phi}\real^{k+1} \to G/H$ with fiberwise inner product.  
Provided that $\phi$ is the direct sum of at most two representations which are either trivial or irreducible, 
we construct metrics of constant scalar curvature on the unit sphere bundle $UE$ of $E$.  
When $G/H$ is the round sphere, we study the number of constant scalar curvature metrics 
in the conformal classes of these metrics.  
%
\end{abstract}

\vspace{\baselineskip}
\noindent\textbf{Keywords}\quad
The Yamabe problem $\cdot$ 
Constant scalar curvature $\cdot$ 
Spectrum of Laplacian $\cdot$ 
Riemannian submersion with totally geodesic fibers $\cdot$ 
Connection metric $\cdot$
Sphere bundle

\vspace{\baselineskip}
\noindent\textbf{Mathematics Subject Classification (2010)}\quad 
53C20

\section{Introduction and results}\label{intro}
Every closed Riemannian manifold of dimension $\ge 3$ 
can be conformally deformed to have constant scalar curvature.  
This is a consequence of the affirmative answer to the Yamabe problem (cf. \cite{LP}, \cite{Aub2}), 
the variational problem resolved in the mid 1980s through combined efforts of 
Yamabe \cite{Yam}, Trudinger \cite{Tru}, Aubin \cite{Aub1}, and Schoen (\cite{Sch1}, \cite{SY}).  
For a metric $g$ of constant scalar curvature, 
it is then interesting to ask how many constant scalar curvature metrics of unit volume there are in the conformal class of $g$.  
In the present article, we construct metrics of constant scalar curvature on sphere bundles over spheres 
and study the number of constant scalar curvature metrics in their conformal classes. 

Throughout this article, 
$R(g)$ 
is the scalar curvature of a Riemannian metric $g$.  
Also, let $S^d(\rho) = (S^d, \rho^2 \overset{\circ}{g}_d)$ be the $d$-dimensional round sphere of radius $\rho$, 
where $\overset{\circ}{g}_d$ is the Riemannian metric on $S^d$ of constant sectional curvature $1$.  

Previous study of O.\ Kobayashi (\cite{KobO1}, \cite{KobO2}), R.\ Schoen \cite{Sch2}, 
and Petean \cite{Petea} (see also Jin--Li--Xu \cite{JLX}) 
on the direct product of round spheres implies the following.  
%


\begin{theorem}\label{Thm:prod}
Consider the product metric $g(r)=\overset{\circ}{g}_m\oplus r^2\overset{\circ}{g}_k$ on $S^m\times S^k$ 
for $m+k\ge 3$.  
\begin{enumerate}
\item Assume $(m-1)r^2\le k$.  
	If a metric $g$ conformal to $g(r)$ has constant scalar curvature and has the same volume as $g(r)$, 
	and if moreover the corresponding conformal factor only depends on the $S^k$-variable, 
	then $g=g(r)$.  
\item The same is true for $(k, m, 1/r)$ in place of $(m, k, r)$.  
\item If $m(m-1)r^2>l(l+k-1)(m+k-1)-k(k-1)$, 
	then the conformal class of $g(r)$ contains at least $l+1$ metrics of constant scalar curvature $R(g(r))$ 
	such that the corresponding conformal factors only depend on the $S^k$-variable.  
\item The same is true for $(k, m, 1/r)$ in place of $(m, k, r)$.  
\end{enumerate}
\end{theorem}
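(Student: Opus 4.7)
My plan is to reduce the problem, for conformal factors depending only on the $S^{k}$-variable, to a semilinear elliptic equation on $(S^{k},\overset{\circ}{g}_{k})$, and then split the analysis into uniqueness (parts (1)--(2)) and multiplicity (parts (3)--(4)).  Set $n=m+k$, $p=2n/(n-2)$, $a=4(n-1)/(n-2)$, and write the competing conformal metric as $\tilde g=u^{4/(n-2)}g(r)$ with $u>0$ a function on $S^{k}$.  The identities $\Delta_{g(r)}u=r^{-2}\Delta_{S^{k}}u$ and $R(g(r))=m(m-1)+k(k-1)/r^{2}$ turn $R(\tilde g)\equiv C$ into
\begin{equation}\label{eq:Yreduced-proposal}
-\frac{a}{r^{2}}\,\Delta_{S^{k}}u+\Bigl(m(m-1)+\frac{k(k-1)}{r^{2}}\Bigr)u=Cu^{p-1}
\end{equation}
on $S^{k}$.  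The constant solution $u_{0}$ with $Cu_{0}^{p-2}=R(g(r))$ always exists, and the equal-volume condition fixes the unique such constant that returns $\tilde g=g(r)$, so (1)--(4) are really statements about non-constant solutions of \eqref{eq:Yreduced-proposal}.

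I would then linearize \eqref{eq:Yreduced-proposal} at $u_{0}$.  Using $(p-2)/a=1/(n-1)$, the linearized operator is $-\Delta_{S^{k}}-\nu$ with
$$\nu=\frac{r^{2}(p-2)}{a}\Bigl(m(m-1)+\frac{k(k-1)}{r^{2}}\Bigr)=\frac{m(m-1)r^{2}+k(k-1)}{m+k-1}.$$
Bifurcation from the constant branch therefore occurs exactly when $\nu$ crosses an eigenvalue $l(l+k-1)$ of $-\Delta_{S^{k}}$, i.e.\ when $m(m-1)r^{2}=l(l+k-1)(m+k-1)-k(k-1)$, matching the threshold in (3); and the hypothesis $(m-1)r^{2}\le k$ of (1) is equivalent to $\nu\le k$, placing us at or below the first non-trivial eigenvalue.

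For (1)--(2) I would upgrade this linear stability to global uniqueness by a Bidaut-V\'eron-type Bochner argument in the spirit of O.\ Kobayashi \cite{KobO1,KobO2}, Schoen \cite{Sch2}, and Petean \cite{Petea}: integrate the Bochner formula for $|\nabla u|^{2}$ against an appropriately tuned power of $u$, apply $\Ric_{\overset{\circ}{g}_{k}}=(k-1)\overset{\circ}{g}_{k}$ together with the Lichnerowicz--Obata bound $\lambda_{1}(-\Delta_{S^{k}})=k$, and combine with the volume constraint to obtain an integral identity whose integrand is a nonnegative combination of squares precisely in the regime $\nu\le k$, forcing $\nabla u\equiv 0$; part (2) is the same argument with the factors swapped and $r\leftrightarrow 1/r$.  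For (3)--(4) I would specialize \eqref{eq:Yreduced-proposal} to radial $u(t)$, recasting it as a second-order ODE on $(0,\pi)$ with Neumann data at the poles, and run the shooting/degree framework of Petean \cite{Petea} and Jin--Li--Xu \cite{JLX}: each time $\nu$ strictly exceeds a radial eigenvalue $l(l+k-1)$ of $-\Delta_{S^{k}}$, the shooting map $s\mapsto u_{t}(\pi;s)$ picks up an extra sign change and hence an additional non-constant radial solution, yielding $l+1$ solutions in total.  The step I expect to be the main obstacle is tuning the power of $u$ in the Bochner computation for (1)--(2) so that the resulting identity collapses at the \emph{sharp} cutoff $(m-1)r^{2}\le k$, rather than at the weaker threshold a generic spectral-gap estimate would produce; once that is in place, the multiplicity assertion in (3)--(4) follows rather mechanically from the bifurcation/shooting machinery of \cite{Petea,JLX}.
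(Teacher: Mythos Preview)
Your proposal is correct and follows essentially the same route as the paper. The paper does not give a self-contained proof of this theorem---it is stated as a summary of prior results of O.~Kobayashi, Schoen, Petean, and Jin--Li--Xu---but the machinery developed later (Corollary~\ref{cor:HorVert}, Theorems~\ref{thm:YamabeUni} and~\ref{thm:YamabeMult}, assembled in Proposition~\ref{prop:number}) is exactly the reduction-to-a-subcritical-equation plus Bidaut-V\'eron--V\'eron uniqueness and Petean/Jin--Li--Xu multiplicity that you outline, and your threshold computations $\nu\le k\iff (m-1)r^{2}\le k$ and $\nu>l(l+k-1)\iff m(m-1)r^{2}>l(l+k-1)(m+k-1)-k(k-1)$ match the paper's conditions on the nose.
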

We should note that Kobayashi and Schoen succeeded in describing 
the space of all unit-volume metrics of constant scalar curvature in the conformal class of $g(r)$ provided $m=1$ or $k=1$.  
More precisely, if $m=1$ or $k=1$, then the metrics appearing in Theorem \ref{Thm:prod} 
are the only metrics of constant scalar curvature in the conformal class of $g(r)$ up to multiplicative constants and isometries.  

The conformal factors in 3 (or 4) of Theorem \ref{Thm:prod} are radial 
in a sense that they are invariant under the cohomogeneity-one action $\SO(k)\lact S^k$ (or $\SO(m)\lact S^m$) fixing exactly two points.  
Henry--Petean \cite{HP} found non-radial solutions using isoparametric hypersurfaces.  

Piccione et al.\ deal with other situations than $S^m(1)\times S^k(r)$, 
such as Riemannian direct products \cite{LPZ1}, 
Hopf fibrations \cite{BP1}, 
and homogeneous Riemannian submersions \cite{BP2}.  

In this article, we attempt to draw a picture similar to Theorem \ref{Thm:prod} for non-trivial sphere bundles over spheres.  
Let $(B, \check{g})=G/H$ be a Riemannian homogenous space.  
Here, $G$ is a connected Lie group acting transitively on $(B, \check{g})$ by isometries, 
and $H$ is the isotropy subgroup at a point of $B$.  
For an orthogonal representation $\phi: H \to SO(k+1)$ of $H$ on the Euclidean space $\real^{k+1}$, 
there corresponds the vector bundle $E=G\times_{\phi}\real^{k+1} \to G/H$ with fiberwise inner product $\langle \cdot, \cdot \rangle$.  
Let $UE=\{s\in E \mid \langle s, s\rangle =1\}$ be the unit sphere bundle of $(E, \langle \cdot, \cdot \rangle)$.  

If $\phi$ is the trivial representation on $\real^{k+1}$, 
then $E=(G/H) \times \real^{k+1}$ and $UE=(G/H) \times S^k(1)$ are trivial bundles.  
Since the scalar curvature of $\check{g}$ is constant, 
the product metric $g(1):=\check{g}\oplus \overset{\circ}{g}_k$ on $UE$ has constant scalar curvature.  
Rescaling the fiber, we obtain the product metric $g(r):=\check{g}\oplus r^2\overset{\circ}{g}_k$ on $UE$ 
of constant scalar curvature 
$R(\check{g})+k(k-1)/r^2$ for $r>0$.  
%
%
%
%
%
For irreducible representations, we prove: 
\begin{theorem}\label{Thm:irrep}
Let $(B, \check{g})=G/H$ be a Riemannian homogenous space and 
$\phi: H \to SO(k+1)$ an orthogonal representation of $H$ on $\real^{k+1}$.  
Assume $\phi$ is irreducible.  
Then, for every $r>0$, 
there exists a Riemannian metric $g(r)$ on the unit sphere bundle $UE$ of $E=G\times_{\phi}\real^{k+1}$ such that \begin{itemize}
\item the projection $(UE, g(r))\to (B, \check{g})$ is a Riemannian submersion 
	with totally geodesic fibers each of which is isometric to the round sphere $S^k(r)$, and 
\item $g(r)$ has constant scalar curvature $R(\check{g})+k(k-1)/r^2 - a^2r^2$.  
\end{itemize}
Here, $a\ge 0$ is a real number independent of $r$.  
\end{theorem}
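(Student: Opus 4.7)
The plan is to build $g(r)$ as a \emph{connection metric} on $UE$ and then read off its scalar curvature using O'Neill's formulas. Since $(B,\check{g})=G/H$ is Riemannian homogeneous, the isotropy representation is orthogonal, and there is an $\Ad(H)$-invariant reductive complement $\mathfrak{m}$ to $\mathfrak{h}$ in $\mathfrak{g}$. The canonical principal connection on the $H$-bundle $G\to G/H$ yields a $G$-invariant horizontal distribution on $UE=G\times_{\phi}S^{k}(1)$. I would define $g(r)$ by declaring $\pi\colon UE\to B$ a Riemannian submersion whose horizontal distribution is this one, whose vertical spaces are orthogonal to horizontal, and whose fibres carry $r^{2}\overset{\circ}{g}_{k}$. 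Since $\phi(H)\subset\SO(k+1)$ acts on $(S^{k}(1),r^{2}\overset{\circ}{g}_{k})$ by isometries, the local data patch into a well-defined $G$-invariant metric whose fibres are isometric to $S^{k}(r)$.

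The next step is to verify that the fibres of $\pi$ are totally geodesic. This is a Vilms-type fact for connection metrics on associated bundles whose structure group acts on the fibre by isometries, and it kills both the shape tensor $T$ and the mean curvature $N$ of the fibres. O'Neill's scalar-curvature formula then reduces to
\[
R(g(r))=\pi^{*}R(\check{g})+R_{\mathrm{fibre}}-|A|^{2}=R(\check{g})+\frac{k(k-1)}{r^{2}}-|A|^{2},
\]
where $A$ is the O'Neill integrability tensor.

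The heart of the argument is to show $|A|^{2}\equiv a^{2}r^{2}$ for some $a\ge 0$ independent of $r$. The factor $r^{2}$ is immediate: horizontal lifts of fields on $B$ do not depend on $r$, so $\mathcal{V}[\tilde X,\tilde Y]=2A_{\tilde X}\tilde Y$ is an $r$-independent vertical vector field, whereas the vertical inner product scales by $r^{2}$. By $G$-invariance it suffices to show that $|A|^{2}_{g(1)}$ is constant along a single fibre, say $\pi^{-1}(eH)\cong S^{k}$. With a $\check{g}_{eH}$-orthonormal basis $\{X_{i}\}$ of $\mathfrak{m}$ and the curvature formula $\Omega(X_{i},X_{j})=-[X_{i},X_{j}]_{\mathfrak{h}}$ for the canonical connection, unwinding definitions at $[e,v]\in UE$ yields
\[
|A|^{2}_{[e,v]}=\tfrac{1}{4}\,\langle Qv,v\rangle,\qquad
Q:=-\sum_{i,j}\phi_{*}\!\bigl([X_{i},X_{j}]_{\mathfrak{h}}\bigr)^{2}\in\End(\real^{k+1}).
\]

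The clinching observation is that $Q$ commutes with $\phi(H)$: conjugating by $\phi(h)$ replaces $\{X_{i}\}$ with $\{\Ad(h)X_{i}\}$, another $\check{g}_{eH}$-orthonormal basis of $\mathfrak{m}$, so the sum is unchanged. Since $\phi$ is irreducible, Schur's lemma forces $Q=c\,\mathrm{Id}$, and $c\ge 0$ because each summand $-\phi_{*}(\xi)^{2}=\phi_{*}(\xi)^{\top}\phi_{*}(\xi)$ is positive semidefinite. Setting $a:=\sqrt{c}/2$ gives $|A|^{2}\equiv a^{2}r^{2}$ on $UE$, and the asserted scalar-curvature formula follows. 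The main obstacle I anticipate is the careful bookkeeping for $A$ in terms of $\Omega$ and the infinitesimal $\phi$-action on the fibre; once that is in place, the Schur step closes matters cleanly.
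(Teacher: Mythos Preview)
Your proposal is correct and follows essentially the same route as the paper: build the connection metric, invoke O'Neill's scalar-curvature formula, express $|A|^{2}$ on a fibre as a quadratic form in $v$ built from the curvature of the principal connection, observe that this form is $\phi(H)$-invariant, and apply Schur's lemma together with $G$-invariance to conclude constancy. The paper packages the quadratic form as $\xi(s,s)=\sum_{i,j}\langle R^{\nabla}(\check E_i,\check E_j)s,\,R^{\nabla}(\check E_i,\check E_j)s\rangle$ (its Lemma~3.4) and handles global constancy via a separate lemma on automorphisms of Riemannian submersions, whereas you work directly at the Lie-algebra level with the canonical reductive connection and the operator $Q$, but the substance is identical.
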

\noindent 
For the proof, we look at the isotropy group as in Boeckx--Vanhecke \cite{BL}.  
Guijarro--Sadun--Walschap \cite[Proposition 5.2]{GSW} looked at the holonomy group and obtained the same result 
under an additional assumption that $E$ admits a parallel connection.  



If $G/H$ is isometric to the round sphere, 
we can study the number of constant scalar curvature metrics in the
conformal class of $g(r)$ as in Theorem \ref{Thm:prod}.  
\begin{theorem}\label{Thm:number}
Suppose $S^m(1)=G/H$, and 
let $g(r)$ be the metric in Theorem \ref{Thm:irrep} for $m+k\ge 3$.  
\begin{enumerate}
\item Assume $-(a^2/m)r^4+(m-1)r^2\le k$.  
	If a metric $g$ conformal to $g(r)$ has constant scalar curvature and has the same volume as $g(r)$, 
	and if moreover the gradient vector field of the corresponding conformal factor is tangent to the fibers, 
	then $g=g(r)$.  
\item Assume $-(a^2/k)r^2+(k-1)/r^2\le m$.  
	If a metric $g$ conformal to $g(r)$ has constant scalar curvature and has the same volume as $g(r)$, 
	and if moreover the corresponding conformal factor is constant along each fiber, 
	then $g=g(r)$.  
\item If $-a^2r^2+k(k-1)/r^2>l(l+m-1)(m+k-1)-m(m-1)$, 
	then the conformal class of $g(r)$ contains at least $l+1$ metrics of constant scalar curvature $R(g(r))$ 
	such that the corresponding conformal factors are constant along each fiber.  
\end{enumerate}
\end{theorem}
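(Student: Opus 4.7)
The plan is to reduce the Yamabe-type equation on $(UE,g(r))$ to a subcritical semilinear elliptic equation on either the base $S^m(1)$ or a fiber $S^k(r)$, and then invoke the Kobayashi--Schoen--Petean arguments used for Theorem~\ref{Thm:prod}. Set $n=m+k$ and, by Theorem~\ref{Thm:irrep} applied with $\check g=\overset{\circ}{g}_m$, write
$$R_0=m(m-1)+\frac{k(k-1)}{r^2}-a^2 r^2$$
for the (constant) scalar curvature of $g(r)$. A conformal metric $u^{4/(n-2)}g(r)$ has constant scalar curvature $R$ if and only if
$$-\frac{4(n-1)}{n-2}\,\Delta_{g(r)}u+R_0\,u=R\,u^{(n+2)/(n-2)}.$$

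For parts~2 and~3, the conformal factor is constant along fibers and hence descends to $\bar u$ on $S^m(1)$. Because the projection $(UE,g(r))\to S^m(1)$ is a Riemannian submersion with totally geodesic fibers, $\Delta_{g(r)}u$ is also basic and coincides with the pullback of $\Delta_{\overset{\circ}{g}_m}\bar u$; the volume form factors, so the same-volume constraint passes to $\bar u$. The PDE reduces to
$$-\frac{4(n-1)}{n-2}\,\Delta_{\overset{\circ}{g}_m}\bar u+R_0\,\bar u=R\,\bar u^{(n+2)/(n-2)}\quad\text{on }S^m(1),$$
which is subcritical since $n>m$. For part~1, the hypothesis that $\grad u$ is vertical makes $u$ horizontally parallel; since the fibers are totally geodesic and mutually congruent to $S^k(r)$ via horizontal transport, $u$ is determined by its restriction to a single fiber and this restriction satisfies the analogous subcritical equation on $S^k(r)$ with the same constant $R_0$.

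Uniqueness in parts~1 and~2 then follows from a first-eigenvalue comparison: linearizing the reduced equation at $u\equiv 1$ gives the condition $(n-1)\lambda_1\ge R_0$ for the constant to be the only solution. On $S^m(1)$ one has $\lambda_1=m$, and $(n-1)m\ge R_0$ reduces to $-(a^2/k)r^2+(k-1)/r^2\le m$, matching part~2; on $S^k(r)$ one has $\lambda_1=k/r^2$, and $(n-1)k/r^2\ge R_0$ reduces to $-(a^2/m)r^4+(m-1)r^2\le k$, matching part~1. Because the reduced equations live on round spheres and are rotationally invariant, this eigenvalue comparison upgrades to full uniqueness among all admissible solutions by the symmetrization/ODE argument of \cite{KobO1,KobO2,Petea,JLX}. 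For part~3 the higher eigenvalues $\lambda_l=l(l+m-1)$ of $\Delta_{\overset{\circ}{g}_m}$ enter in the same way: Petean's bifurcation and variational method \cite{Petea} (see also \cite{JLX}) produces at least $l+1$ positive solutions with $R=R_0$ of the reduced equation on $S^m(1)$ as soon as $R_0>(n-1)\lambda_l=(m+k-1)\,l(l+m-1)$, which is exactly the displayed inequality of the hypothesis.

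The main obstacle I foresee is justifying the Laplacian and volume-form reductions on the possibly non-trivial total space $UE$; this rests crucially on the fibers being totally geodesic and mutually isometric, which is exactly what Theorem~\ref{Thm:irrep} guarantees. Once this reduction is in place, the rest is a direct transcription of \cite{KobO1,KobO2,Sch2,Petea,JLX} with $R_0$ as the effective scalar-curvature constant; the new term $-a^2 r^2$ affects only the numerical thresholds and not the structural steps of the argument.
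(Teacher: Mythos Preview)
Your proposal is correct and follows the same overall strategy as the paper: reduce the Yamabe equation on $(UE,g(r))$ to a subcritical equation on the base $S^m(1)$ or on a fiber $S^k(r)$ via the Riemannian-submersion-with-totally-geodesic-fibers structure (the paper packages this as Corollary~\ref{cor:HorVert}, using the B\'erard-Bergery--Bourguignon commutation $\Delta_{g(r)}\circ\pi^*=\pi^*\circ\Delta_{\check g}$ and $\Delta_{g(r)}\circ\iota_*=\iota_*\circ\Delta_{\hat g}$), and then invoke known results on round spheres. Part~(3) matches the paper's use of Petean's theorem (Theorem~\ref{thm:YamabeMult}) exactly.

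The one substantive difference is in how uniqueness is justified in parts~(1) and~(2). The paper does \emph{not} argue via linearization and symmetrization; it appeals directly to the Bidaut-Veron--Veron theorem \cite[Theorem~6.1]{VV} (stated here as Theorem~\ref{thm:YamabeUni}), which gives global uniqueness of the constant solution under the Ricci bound $\Ric(h)\ge \frac{d-1}{d}\frac{R_0}{n-1}h$. On a round sphere $S^d(\rho)$ this Ricci bound is numerically identical to your eigenvalue condition $(n-1)\lambda_1\ge R_0$, so the thresholds agree, but your sentence ``linearizing the reduced equation at $u\equiv 1$ gives the condition $(n-1)\lambda_1\ge R_0$ for the constant to be the only solution'' is not right as stated: linearization only gives local stability, not global uniqueness. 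Your subsequent appeal to a symmetrization/ODE route via \cite{KobO1,KobO2,Petea,JLX} can be made to work on round spheres, but it is more circuitous and those references do not directly state the uniqueness threshold you need; Bidaut-Veron--Veron is the clean citation here, and it has the bonus of applying to arbitrary $(N,h)$ satisfying the Ricci bound, not just round spheres.
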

\noindent 
The statement corresponding to (3) of Theorem \ref{Thm:prod} is not necessarily true for irreducible representations 
(cf.\ Sect.\ \ref{sect:number}).  

We can also build metrics of constant scalar curvature in the following situation.  
\begin{theorem}\label{Thm:sum}
Let $(B, \check{g})=G/H$ be a Riemannian homogenous space and 
$\phi: H \to SO(k+1)$ an orthogonal representation of $H$ on $\real^{k+1}$.  
Assume $\phi$ is the direct sum of at most two representations which are either trivial or irreducible.  

Then, there exist one or two one-parameter families of Riemannian metrics with constant scalar curvature 
on the unit sphere bundle $UE$ of $E=G\times_{\phi}\real^{k+1}$ 
such that the projection $UE\to (B, \check{g})$ becomes Riemannian submersions with totally geodesic fibers.  
The (intrinsic) scalar curvatures of the typical fibers are not necessarily constant.  
\end{theorem}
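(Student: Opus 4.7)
The strategy is to extend the construction of Theorem \ref{Thm:irrep} by allowing the fibers of $UE\to B$ to carry a cohomogeneity-one $H$-invariant metric of doubly-warped type. In the reducible setting, the $A$-tensor is no longer constant along the fiber, so the fiber profile must be deformed along the cohomogeneity parameter to restore constant total scalar curvature.

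Decompose $\phi=\phi_1\oplus\phi_2$ on $\real^{k_1+1}\oplus\real^{k_2+1}$ and accordingly $E=E_1\oplus E_2$. The fiber $S^k$ inherits the join decomposition $S^k\cong S^{k_1}\ast S^{k_2}$, parameterized by $(t,u_1,u_2)\mapsto(\cos t\cdot u_1,\sin t\cdot u_2)$ with $t\in[0,\pi/2]$. Since each $\phi_i$ is trivial or irreducible, every $H$-invariant metric on $S^k$ respecting the join takes the doubly-warped form
\[
g_{\mathrm{fib}}=dt^2+f_1(t)^2\,\overset{\circ}{g}_{k_1}+f_2(t)^2\,\overset{\circ}{g}_{k_2},
\]
with endpoint conditions ensuring smooth descent to $S^k$. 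Couple $g_{\mathrm{fib}}$ with the horizontal distribution induced by the canonical connections on $E_1,E_2$; the resulting metric $g$ on $UE$ is $G$-invariant by $H$-invariance of $g_{\mathrm{fib}}$, and $UE\to B$ is a Riemannian submersion with totally geodesic fibers (Vilms).

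Compute $R(g)$ via the O'Neill identity with $T=0$: $R(g)=R(\check{g})+R(g_{\mathrm{fib}})-|A|^2$. Because the connection on $E$ respects the splitting, so does the $A$-tensor, and applying the irreducible-case calculation of Theorem \ref{Thm:irrep} to each summand yields
\[
|A|^2=a_1^2\,f_1(t)^2+a_2^2\,f_2(t)^2,
\]
with constants $a_i\geq 0$ ($a_i=0$ when $\phi_i$ is trivial). The intrinsic scalar curvature $R(g_{\mathrm{fib}})$ is the standard doubly-warped expression in $f_1,f_2$ and their first two derivatives. Setting $R(g)\equiv\text{const}$ produces a second-order scalar ODE in $(f_1,f_2)$ on $[0,\pi/2]$. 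Impose an ansatz that rescales the round join profile $(\cos t,\sin t)$ independently in each nontrivial sphere factor: each nontrivial $\phi_i$ contributes a scaling parameter $r_i>0$, while a trivial $\phi_i$ is rigid. Solving the reduced equation (which cuts the two-parameter ansatz down by one algebraic relation) yields the advertised families, with the dichotomy ``one or two'' tracking how many $\phi_i$ are nontrivial together with the branch structure of the reduced equation.

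The main obstacle will be the endpoint analysis at $t=0$ and $t=\pi/2$: the profiles $f_i$ must vanish to first order with the correct derivative signs so that $g_{\mathrm{fib}}$ extends smoothly to a genuine Riemannian metric on the topological $S^k$ rather than one with a conical singularity on the polar orbit. Equally delicate is the precise identification of the constants $a_i$ from the irreducible case and the verification that the non-constant $A$-tensor is correctly tracked under the chosen ansatz; the mixed subcase (one summand trivial), where the fiber structure degenerates to a singly-warped product, requires a separate, somewhat simpler treatment.
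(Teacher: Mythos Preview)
Your setup is correct and essentially matches the paper: the join decomposition $S^k\cong S^{k_1}\ast S^{k_2}$, the doubly-warped fiber metric $dt^2+f_1^2\,\overset{\circ}{g}_{k_1}+f_2^2\,\overset{\circ}{g}_{k_2}$, the $G$-invariant connection producing totally geodesic fibers, and especially the formula $|A|^2=a_1^2 f_1^2+a_2^2 f_2^2$ are exactly what the paper establishes (Lemma~\ref{lem:Joinf1f2}). The paper then packages this as Theorem~\ref{Thm:FiberJoin} and deduces Theorem~\ref{Thm:sum} from it together with Theorem~\ref{Thm:irrep}.

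The gap is your ansatz. ``Rescaling the round profile $(\cos t,\sin t)$ independently'' cannot succeed, for the very reason you flag as the main obstacle: the smoothness conditions at the poles force $f_2'(0)=1$ and $f_1'(T)=-1$, so if $f_1=c_1\cos(\gamma t)$ and $f_2=c_2\sin(\gamma t)$ on $[0,T]$ with $T=\pi/(2\gamma)$, then necessarily $c_1=c_2=1/\gamma$. The only surviving profiles are round spheres of radius $1/\gamma$, for which $|A|^2=(a_1^2\cos^2+a_2^2\sin^2)/\gamma^2$ is constant along the fiber only when $a_1=a_2$. In every other case---in particular the mixed case you single out (one summand trivial, so $a_1=0\neq a_2$)---your ansatz yields no solution at all, so there is nothing to ``cut down by one algebraic relation.''

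The paper fixes this by taking Jacobi elliptic profiles
\[
f_1(t)=\frac{\cn_{\mathbf{k}}(\gamma t)}{\gamma\sqrt{1-\mathbf{k}^2}},\qquad
f_2(t)=\frac{\sn_{\mathbf{k}}(\gamma t)}{\gamma},
\]
with modulus $\mathbf{k}\in[0,1)$. These still meet the pole conditions but now give a genuine two-parameter family $(\gamma,\mathbf{k})$, and a direct computation (Lemma~\ref{lem:ODEanJoin}) shows the constant-scalar-curvature ODE reduces to the single algebraic constraint
\[
(k_1+k_2)(k_1+k_2+3)\,\gamma^4\mathbf{k}^2(1-\mathbf{k}^2)=a_1^2-(1-\mathbf{k}^2)a_2^2,
\]
whose solution branches produce the one or two one-parameter families (Remark~\ref{rem:parameter}). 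Your intuition that a two-parameter family gets cut by one relation is exactly right; the parameters just have to be $(\gamma,\mathbf{k})$ rather than $(r_1,r_2)$.
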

\noindent 
When $G/H=\SU(2)/\U(1)=\CP^1$ and $\phi$ is the direct sum of an irreducible and the $1$-dimensional trivial representations, 
then we recover the constant scalar curvature metrics on Hirzebruch surfaces in \cite{Oto}.  
We do not know if the statement of Theorem \ref{Thm:sum} holds 
for every orthogonal representation $\phi: H\to \SO(k+1)$.  
Theorem \ref{Thm:sum} is a consequence of Theorem \ref{Thm:irrep} and the following
\begin{theorem}\label{Thm:FiberJoin}
Let $E_i \to B$ be a real vector bundle of rank $k_i+1\ge 1$, 
$\langle \cdot, \cdot \rangle_i$ a fiberwise inner product on $E_i$, 
and $\nabla^i$ a metric connection on $\left( E_i, \langle \cdot, \cdot \rangle_i \right)$ for $i=1, 2$.  
Take a Riemannian metric $\check{g}$ on $B$ 
and define the connection metric $g_i=\check{g} \oplus \overset{\circ}{g}_{k_i}$ on $\pi_i: UE_i\to B$ 
with respect to the Ehresmann connection induced by $\nabla^i$.  
Assume the scalar curvatures of $\check{g}$, $g_1$, and $g_2$ are constant.  

Then, 
there exist one or two one-parameter families of connection metrics with constant scalar curvature 
on the unit sphere bundle $\pi: U(E_1\oplus E_2)\to (B, \check{g})$.  
The (intrinsic) scalar curvatures of the typical fibers are not necessarily constant.  
%
\end{theorem}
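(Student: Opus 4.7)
The plan is to parametrize the fiber spheres of $\pi\colon U(E_1\oplus E_2)\to B$ via the spherical join decomposition $S^{k_1+k_2+1}=S^{k_1}*S^{k_2}$ and to search for cohomogeneity-one fiber metrics that render the full connection metric of constant scalar curvature.

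First I would introduce a coordinate $t\in[0,\pi/2]$ along the fiber so that a generic fiber point is written as $(\cos t\cdot\hat{v}_1,\sin t\cdot\hat{v}_2)$ with $\hat{v}_i\in S^{k_i}$, the subbundles $UE_1\hookrightarrow U(E_1\oplus E_2)$ and $UE_2\hookrightarrow U(E_1\oplus E_2)$ corresponding to $t=0$ and $t=\pi/2$ respectively. For smooth non-negative functions $f_1,f_2$ on $[0,\pi/2]$ satisfying the expected smoothness conditions at the two poles ($f_1(0)=0$, $f_1'(0)=1$, $f_2(\pi/2)=0$, $f_2'(\pi/2)=-1$, both positive on the interior), equip each fiber with the $\SO(k_1+1)\times\SO(k_2+1)$-invariant doubly warped metric
\[
\hat{g}=dt^2+f_1(t)^2\overset{\circ}{g}_{k_1}+f_2(t)^2\overset{\circ}{g}_{k_2},
\]
and form the connection metric $g=\check{g}\oplus\hat{g}$ on $U(E_1\oplus E_2)$ using the Ehresmann connection induced by $\nabla^1\oplus\nabla^2$. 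Since parallel transport for $\nabla^1\oplus\nabla^2$ lies in $\SO(k_1+1)\times\SO(k_2+1)$, which preserves $\hat{g}$, the projection $\pi$ is automatically a Riemannian submersion with totally geodesic fibers.

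Second I would compute $R(g)$ via O'Neill's formula for Riemannian submersions with totally geodesic fibers,
\[
R(g)=R(\check{g})+R(\hat{g})(t)-\|A\|^2,
\]
where $A$ is the A-tensor. Since the curvature of $\nabla^1\oplus\nabla^2$ splits as $\Omega^1\oplus\Omega^2$, a direct computation in the join coordinates yields
\[
\|A\|^2_{(b,t,\hat{v}_1,\hat{v}_2)}=f_1(t)^2\,\sigma_1(b,\hat{v}_1)+f_2(t)^2\,\sigma_2(b,\hat{v}_2),
\]
where $\sigma_i$ is the squared A-tensor norm of the submersion $\pi_i\colon UE_i\to B$ induced by $\nabla^i$. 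Applying O'Neill's formula to $g_i$ on $UE_i$, together with the standing hypotheses that $R(g_i)$, $R(\check{g})$, and $R(\overset{\circ}{g}_{k_i})=k_i(k_i-1)$ are all constants, forces $\sigma_i$ to reduce to a constant $c_i\ge 0$. Hence
\[
R(g)=R(\check{g})+R(\hat{g})(t)-c_1f_1(t)^2-c_2f_2(t)^2
\]
depends only on $t$, and requiring it to be constant becomes a single scalar second-order ODE in the pair $(f_1,f_2)$ on $[0,\pi/2]$, supplemented by the smoothness conditions at the two poles.

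Since we have two unknown functions and only one equation, I would exhibit explicit solutions via a natural trigonometric ansatz depending on one overall scale parameter. Such an ansatz turns the ODE into an algebraic constraint, typically quadratic in the remaining shape parameter; each positive root of this quadratic that is compatible with the boundary conditions at both $t=0$ and $t=\pi/2$ supplies a one-parameter family of admissible metrics, parametrized by the free scale. Depending on the discriminant, one thus obtains \emph{one or two} such families, as asserted. The claim that the intrinsic fiber scalar curvature need not be constant is then immediate from the displayed identity, since $R(\hat{g})(t)=\mathrm{const}+c_1f_1(t)^2+c_2f_2(t)^2$ genuinely varies with $t$ whenever one of $c_1,c_2$ is positive. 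The main obstacle I anticipate is the careful handling of the degeneration of the ODE at the focal subbundles $t=0,\pi/2$ in order to guarantee smoothness of $\hat{g}$ there, and the identification of the precise algebraic criterion on $c_1,c_2$, $R(\check{g})$, and the dimensions that distinguishes the one-family case from the two-family case.
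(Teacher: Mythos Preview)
Your setup is exactly the paper's: the join coordinates on the fiber, the doubly warped ansatz $\hat g=dt^2+f_1^2\overset{\circ}{g}_{k_1}+f_2^2\overset{\circ}{g}_{k_2}$, the connection metric for $\nabla^1\oplus\nabla^2$, O'Neill's formula, and the reduction $\|A\|^2=c_1f_1^2+c_2f_2^2$ with $c_i=\lvert A^i\rvert^2$ constant. Up to and including the displayed ODE for $(f_1,f_2)$ your outline coincides with the paper's Lemmas on the fiberwise join.

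The gap is at the step where you actually solve the ODE. A trigonometric ansatz does \emph{not} work unless $c_1=c_2$. Indeed, the boundary conditions force any pair $f_1=A\cos(\gamma t)$, $f_2=B\sin(\gamma t)$ to satisfy $A=B=1/\gamma$, hence $f_1^2+f_2^2=1/\gamma^2$ is constant; then $c_1f_1^2+c_2f_2^2$ is constant only when $c_1=c_2$, and otherwise $R(g)$ cannot be constant. The paper instead uses the Jacobi elliptic functions
\[
f_1(t)=\frac{\cn_{\mathbf{k}}(\gamma t)}{\gamma\sqrt{1-\mathbf{k}^2}},\qquad f_2(t)=\frac{\sn_{\mathbf{k}}(\gamma t)}{\gamma},
\]
with modulus $\mathbf{k}\in[0,1)$, which reduce to $\cos,\sin$ only at $\mathbf{k}=0$. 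Plugging these in collapses the ODE to the single algebraic relation
\[
(k_1+k_2)(k_1+k_2+3)\,\gamma^4\,\mathbf{k}^2(1-\mathbf{k}^2)=c_1-(1-\mathbf{k}^2)c_2,
\]
which determines $\gamma$ from $\mathbf{k}$ (not the other way around). The one-parameter family is parametrized by the modulus $\mathbf{k}$, not by a free overall scale.

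Consequently your account of why there are ``one or two'' families is also off. It is not a discriminant phenomenon for a quadratic in a shape parameter. In the paper, a single ordering $(E_1,E_2)$ produces one family (over the admissible range of $\mathbf{k}$); the possible second family arises by interchanging $E_1$ and $E_2$. One gets two distinct families precisely when both $c_1,c_2>0$, and only one family if at least one $\nabla^i$ is flat. Finally, your last sentence overstates slightly: $R(\hat g)$ is constant exactly when $\mathbf{k}=0$, which forces $c_1=c_2$; so non-constant fiber scalar curvature occurs whenever $c_1\neq c_2$ (and also for $c_1=c_2>0$ with $\mathbf{k}\neq 0$), but not ``whenever one of $c_1,c_2$ is positive''.
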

\noindent 
Some remarks on Theorem \ref{Thm:FiberJoin} are in order.  
(1) By definition of connection metrics, 
the projection $U(E_1\oplus E_2)\to (B, \check{g})$ becomes Riemannian submersions with totally geodesic fibers (see Sect.\ \ref{sect:ConnMet}).  
(2) If neither $\nabla^1$ nor $\nabla^2$ is flat, there are two one-parameter families; otherwise there is only one (see Remark \ref{rem:parameter}).  
(3) The typical fibers of these connection metrics can have constant scalar curvature 
only if 
the norms of O'Neill's integrability tensors for $\pi_1$ and $\pi_2$ are equal 
(see Remark \ref{rem:inductive}).  



%


This paper is organized as follows.  
In Sect.\ \ref{sect:ConnMet}, 
we recall the notion of connection metrics.  
In Sect.\ \ref{sect:construction}, we construct Riemannian metrics of constant scalar curvature on sphere bundles.  
After proving Theorem \ref{Thm:irrep} in Sect.\ \ref{sect:proofirrep}, 
we introduce the fiberwise join of two sphere bundles and prove Theorem \ref{Thm:FiberJoin} in Sect.\ \ref{sect:FiberJoin}.  
Theorem \ref{Thm:sum} then follows easily (Sect.\ \ref{sect:ProofOfThm:sum}).  
In Sect.\ \ref{sect:number}, 
we prove Theorem \ref{Thm:number} after discussing the Yamabe problem and Riemannian submersion with totally geodesic fibers.  

\section{Connection metrics}\label{sect:ConnMet} 

Let $\pi: M \to B$ be a product bundle.  $M=B\times F$ and $\pi$ is the projection onto the first factor.  
For Riemannian metrics $\check{g}$ and $\hat{g}$ on $B$ and $F$, respectively, 
we can define the product metric $g:= \check{g} \oplus \hat{g}$ on the total space $M$.  
This procedure was generalized by Vilms to fiber bundles with structure group, 
and the resulting metrics are called connection metrics in modern terminology \cite[\S 2.7]{GW}.  
We set up the notation in this section.  

We recall the result of Hermann \cite{Herm}.  
Let $\pi: (M, g)\to (B, \check{g})$ be a Riemannian submersion with totally geodesic fibers 
and assume $g$ is complete.  
We call $VM=\ker \pi_*$ and $HM=VM^{\perp g}$ the vertical and horizontal subbundles of $TM$, respectively.  
Let $\check{\gamma}: [0, 1]\to B$ be a path.  
For every choice of $x\in \pi^{-1}(\check{\gamma}(0))$, 
we can uniquely lift $\check{\gamma}$ to the path $\gamma: [0, 1]\to M$ 
starting from $x$ so that the velocity vector field $\frac{d}{dt}\gamma$ is horizontal.  
Moreover, the parallel transport $\pi^{-1}(\check{\gamma}(0)) \to \pi^{-1}(\check{\gamma}(1))$ is an isometry.  
Therefore, 
\begin{enumerate}
\item All fibers of $\pi$ are mutually isometric.  
We choose a base point $o$ of $B$, call $\pi^{-1}(o)=(F, \hat{g})$ the typical fiber, 
and denote by $\iota: F\to M$ the inclusion map.  
\item We can introduce a Lie group $H$ acting on $(F, \hat{g})$ isometrically so that the following holds.  
$\pi: M \to B$ is a fiber bundle with structure group $H$, 
and the holonomy group at $o$ of the Ehresmann connection $HM$ is contained in the image of the action $H\to \Isom(F, \hat{g})$.  
\end{enumerate}
We summarize this situation as 
\begin{align}\label{eq:TGF}
H\overset{\text{isom}}{\lact}(F, \hat{g}) \xrightarrow{\iota} (M, g) \xrightarrow{\pi} (B, \check{g}).  
\end{align}

Vilms \cite{Vil} considered the converse.  
Let $H\lact F \xrightarrow{\iota} M \xrightarrow{\pi} B$ 
be a fiber bundle with structure group and $HM\subset TM$ an Ehresmann $H$-connection.  
Here, $H$ is a (finite-dimensional) Lie group acting on $F$, 
$F=\pi^{-1}(o)$ for some $o\in B$, 
and the holonomy group at $o$ of $HM$ is contained in the image of the action $H \to \Diff(F)$.  
Assume that there exists a Riemannian metric on $F$ invariant under the action of $H$ and that $M$ is connected.  
For a metric $\check{g}$ on $B$ and an $H$-invariant metric $\hat{g}$ on $F$, 
we can uniquely define a Riemannian metric $g:=\check{g} \oplus_{HM} \hat{g}$ on $M$ so that 
\begin{enumerate}
\item $\pi: (M, g) \to (B, \check{g})$ is a Riemannian submersion and $(\ker \pi_*)^{\perp g} = HM$, 
\item the parallel transport $\pi^{-1}\left( \check{\gamma}(0)\right) \to \pi^{-1}\left( \check{\gamma}(1)\right)$ 
with respect to $HM$ is an isometry for every path $\check{\gamma}: [0, 1] \to B$, and 
\item the inclusion map $\iota: F \to M$ is isometric:  $\hat{g}=\iota^* g$.   
\end{enumerate}  
With the metric $g$, each fiber of $\pi: (M, g) \to (B, \check{g})$ is totally geodesic.  
If both $\check{g}$ and $\hat{g}$ are complete, then $g$ is complete, 
and we recover the situation \eqref{eq:TGF}.  
We call $g=\check{g} \oplus_{HM} \hat{g}$ the connection metric of $\check{g}$ and $\hat{g}$ with respect to the Ehresmann connection $HM$.  
If $\pi: M=B\times F\to B$ is the product bundle, 
then the product metric $\check{g}\oplus\hat{g}$ agrees with the connection metric $\check{g}\oplus_{HM}\hat{g}$, 
where  $HM$ 
is the integrable Ehresmann connection.  
We usually write a connection metric simply as $g=\check{g} \oplus_{HM} \hat{g}=\check{g} \oplus \hat{g}$ 
whenever the Ehresmann connecion $HM$ is clear from the context.  
%

In what follows, we always assume that the total space of a Riemannian submersion with totally geodesic fibers is complete 
and connected.  

\section{Construction of metrics}\label{sect:construction}

\subsection{Integrability of connections}

Let 
$\pi: (M, g)\to (B, \check{g})$ 
be a Riemannian submersion with totally geodesic fibers.  
The scalar curvature of $g$ satisfies 
\begin{align}\label{eq:ONeill}
R(g)=\pi^*R(\check{g})+\hat{R}-\lvert A\rvert^2.  
\end{align}
Here, $\hat{R}$ is the scalar curvature of the fibers, 
and $\lvert A\rvert$ is the norm of O'Neill's integrability tensor.  
More precisely, 
$\hat{R}(x)$ is the scalar curvature at $x\in M$ of the Riemannian submanifold $\pi^{-1}(\pi(x))\subset (M, g)$, 
and 
\begin{align}\label{eq:DefONeill}
\lvert A\rvert^2=\sum_{i, j=1}^mg(A_{E_i}E_j, A_{E_i}E_j), &&
A_{E_i} {E_j}=\frac{1}{2}\mathcal{V}[E_i, E_j]
\end{align}
where $m=\dim B$, 
$E_1, \dots, E_m$ is an orthonormal basis of $HM=\left( \ker \pi_*\right)^{\perp}$, 
and $\mathcal{V}$ is the projection onto $VM=\ker \pi_*$.  
See \cite[Chapter 9]{Bes3}.  
%
%
$\lvert A\rvert=0$ on $M$ if and only if the horizontal distribution $HM$ is integrable.  

\begin{lemma}\label{HhatR}
Let $\pi: (M, g)\to (B, \check{g})$ be a Riemannian submersion with totally geodesic fibers.  
If $\check{H}$ is a vector field on $B$ and $H$ its horizontal lift to $M$, 
then $H(\hat{R})=0$.  
\end{lemma}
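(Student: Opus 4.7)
The plan is to deduce this directly from Hermann's theorem, which was recalled at the start of Section \ref{sect:ConnMet}: horizontal parallel transport along any path in the base is an isometry between the corresponding fibers. Since scalar curvature is an isometry invariant, the fiberwise scalar curvature $\hat{R}$ must be constant along horizontal lifts of base curves, and differentiating gives the statement.

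More concretely, I would argue pointwise. Fix $x\in M$ and set $o=\pi(x)$. Let $\check{\gamma}\colon(-\varepsilon,\varepsilon)\to B$ be an integral curve of $\check{H}$ with $\check{\gamma}(0)=o$ (this exists locally, and it is enough since $H(\hat R)$ is computed pointwise; if $\check{H}_o=0$ then $H_x=0$ and there is nothing to prove). Let $\gamma\colon(-\varepsilon,\varepsilon)\to M$ be the horizontal lift of $\check{\gamma}$ starting at $x$, so that $\gamma'(0)=H_x$.

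By Hermann's theorem, horizontal parallel transport along $\check{\gamma}$ yields, for each $t$, an isometry
\begin{align*}
\Phi_t\colon\bigl(\pi^{-1}(o),\hat{g}\bigr)\longrightarrow\bigl(\pi^{-1}(\check{\gamma}(t)),\hat{g}\bigr),
\end{align*}
and by construction $\Phi_t(x)=\gamma(t)$. Since the scalar curvature of a Riemannian manifold is preserved by isometries, we have
\begin{align*}
\hat{R}(\gamma(t))=\hat{R}(\Phi_t(x))=\hat{R}(x)
\end{align*}
for every $t\in(-\varepsilon,\varepsilon)$. Differentiating at $t=0$ yields
\begin{align*}
H_x(\hat{R})=\gamma'(0)(\hat{R})=\left.\tfrac{d}{dt}\right|_{t=0}\hat{R}(\gamma(t))=0.
\end{align*}

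There is really no main obstacle here: the substance of the lemma is already packaged in the Hermann parallel-transport statement recalled in Section \ref{sect:ConnMet}, and the only work is to read it off for an integral curve of $\check{H}$ and then differentiate. The one small point worth mentioning explicitly, as above, is that the case $\check{H}_o=0$ is handled trivially because $H_x$ then vanishes; otherwise $\check{H}$ has a genuine integral curve near $o$ and Hermann's isometry applies.
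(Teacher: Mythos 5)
Your proof is correct and follows essentially the same route as the paper's: both take the integral curve of $\check{H}$, observe that its horizontal lift is generated by Hermann's fiberwise parallel-transport isometries, use isometry invariance of the fibers' scalar curvature to see $\hat{R}$ is constant along the lift, and differentiate. Your version merely spells out the pointwise details (the isometry $\Phi_t$, the trivial case $\check{H}_o=0$) that the paper leaves implicit.
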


\begin{proof}
Let $\gamma$ be an integral curve of $H$.  
$\gamma$ is a horizontal lift of an integral curve $\check{\gamma}$ of $\check{H}$.  
Since the parallel transport of fibers along $\check{\gamma}$ is an isometry, 
$\hat{R}$ is constant along $\gamma$.  
Hence $H(\hat{R})=0$.  
\end{proof}

\begin{proposition}\label{cscRiemSubm}
Let $(F, \hat{g})\to
(M, g)\xrightarrow{\pi} (B, \check{g})$ be a Riemannian submersion with totally geodesic fibers, 
and assume $g$ has constant scalar curvature.  

Then, the norm $\lvert A\rvert$ of O'Neill tensor for $\pi$ is constant 
if and only if both $\check{g}$ and $\hat{g}$ have constant scalar curvature.  
Moreover, the following dichotomy holds.  
\begin{enumerate}
\item $\lvert A \rvert$ is constant if and only if $\check{g}\oplus c\hat{g}$ 
has constant scalar curvature for every real number $c>0$.  
\item $\lvert A \rvert$ is not constant if and only if 
the scalar curvature of $\check{g}\oplus c\hat{g}$ is not constant for every real number such that $c\neq 1$, $c>0$.    
\end{enumerate}
Here, $\check{g}\oplus c\hat{g}$ is the connection metric with respect to $HM=\left( \ker \pi_*\right)^{\perp g}$.  
\end{proposition}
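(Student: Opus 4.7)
The plan is to combine the O'Neill formula \eqref{eq:ONeill} with a scaling analysis and Lemma \ref{HhatR}. The first step is to record how $\hat{R}$ and $\lvert A\rvert^2$ transform when the fiber metric is rescaled. In the Vilms construction, replacing $\hat{g}$ by $c\hat{g}$ leaves the horizontal distribution $HM$ and the vertical projection $\mathcal{V}$ unchanged, so a $g$-orthonormal horizontal frame is still orthonormal for $g_c:=\check{g}\oplus c\hat{g}$ and the O'Neill tensor $A$ itself is unaltered. Measuring its norm with the new fiber metric gives $\lvert A\rvert^2_{g_c}=c\lvert A\rvert^2_g$, while the homothety rescales the intrinsic fiber scalar curvature by $1/c$, so that
\begin{equation*}
R(g_c)=\pi^*R(\check{g})+\hat{R}/c-c\lvert A\rvert^2
\end{equation*}
as functions on $M$.

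For the first equivalence, the ``if'' direction is immediate from \eqref{eq:ONeill} and the hypothesis that $R(g)$ is constant. Conversely, if $\lvert A\rvert$ is constant then $\pi^*R(\check{g})+\hat{R}$ is constant on $M$; restricted to any fiber $F_b=\pi^{-1}(b)$, on which $\pi^*R(\check{g})$ equals the constant $R(\check{g})(b)$, this forces $\hat{R}$ to be constant on $F_b$, so $\hat{g}$ has constant scalar curvature. By Lemma \ref{HhatR}, $\hat{R}$ is furthermore constant along horizontal lifts: given $x_1\in F_{b_1}$ and $x_2\in F_{b_2}$, lifting horizontally through $x_1$ a curve in $B$ from $b_1$ to $b_2$ produces $y\in F_{b_2}$ with $\hat{R}(x_1)=\hat{R}(y)=\hat{R}(x_2)$. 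Hence $\hat{R}$, and therefore $\pi^*R(\check{g})$, is constant on $M$, so $\check{g}$ has constant scalar curvature.

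For the dichotomy, the decisive step is to show that if $R(g_{c_0})$ is constant on $M$ for some $c_0>0$ with $c_0\neq 1$, then $\lvert A\rvert$ is already constant. Writing the scaling formula at $c=1$ and $c=c_0$ and restricting to any fiber $F_b$ (on which $\pi^*R(\check{g})$ is constant) yields the linear system
\begin{align*}
\hat{R}-\lvert A\rvert^2&=\text{const on }F_b,\\
\hat{R}/c_0-c_0\lvert A\rvert^2&=\text{const on }F_b,
\end{align*}
whose determinant $c_0-1/c_0$ is nonzero. Hence $\hat{R}$ and $\lvert A\rvert^2$ are each constant on $F_b$, and the argument of the previous paragraph upgrades $\hat{R}$ to a constant on all of $M$. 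Substituting this back into \eqref{eq:ONeill} and into the scaling formula at $c=c_0$ produces two identities of the form ``$\pi^*R(\check{g})-\lambda\lvert A\rvert^2=\text{const on }M$'' with $\lambda=1$ and $\lambda=c_0$; their difference is $(c_0-1)\lvert A\rvert^2$ up to a constant, so $\lvert A\rvert^2$ is constant on $M$. This establishes (2) and the nontrivial direction of (1); the opposite direction of (1) follows because, once $\hat{R}$ and $\pi^*R(\check{g})$ are known to be constant on $M$, the scaling formula delivers a constant $R(g_c)$ for every $c>0$.

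The main obstacle is the bookkeeping distinction between ``constant on each fiber'' and ``constant on $M$'': the linear-algebra step on a single fiber produces only the former, and Lemma \ref{HhatR}, which encodes the totally geodesic hypothesis, is the indispensable bridge to the latter.
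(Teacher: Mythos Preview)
Your proof is correct and uses the same ingredients as the paper: the O'Neill formula, the scaling identity $R(g_c)=\pi^*R(\check g)+c^{-1}\hat R-c|A|^2$, and Lemma~\ref{HhatR}. The organization differs only slightly: where the paper differentiates \eqref{eq:ONeill} along horizontal lifts to obtain $H(|A|^2)=\pi^*\check H(R(\check g))$ and deduces constancy of $R(\check g)$ first, you instead restrict to a single fiber, solve the $2\times 2$ linear system (determinant $1/c_0-c_0\neq 0$) to get $\hat R$ and $|A|^2$ constant fiberwise, and then invoke Lemma~\ref{HhatR} to propagate $\hat R$ across fibers---but this is the same argument viewed from the dual side.
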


\begin{proof}
%
The following observation is convenient.  
For a vector field $\check{H}$ on $B$ and its horizontal lift $H$ to $M$, 
differentiate the both sides of \eqref{eq:ONeill} by $H$ to get 
$H\left(R(g)\right)=\pi^*\check{H}(R (\check{g}))+H(\hat{R})-H(\lvert A\rvert^2)$.  
Since $R(g)$ is constant, Lemma \ref{HhatR} implies
\begin{equation}\label{eq:DiffH}
H\left(\lvert A\rvert^2\right)=\pi^*\check{H}(R(\check{g})).  
\end{equation}

Suppose $\lvert A\rvert$ is constant.  
Then, \eqref{eq:DiffH} implies $\check{H}(R(\check{g}))=0$ for every vector field $\check{H}$ on $B$.  
Hence $R(\check{g})$ is constant, and $\hat{R}=R(g)-\pi^*R(\check{g})+\lvert A\rvert^2$ is also constant.  
The opposite implication is immediate from \eqref{eq:ONeill}.  
  
For a real number $c>0$, \eqref{eq:ONeill} implies that the scalar curvature of $g_c := \check{g} \oplus c \hat{g}$ is 
\begin{align}\label{eq:ONeill'}
R(g_c)=\pi^*R(\check{g})+c^{-1}\hat{R}-c\lvert A\rvert^2.  
\end{align}
Suppose $\lvert A\rvert$ is constant.  
Since both $R(\check{g})$ and $\hat{R}$ are constant, 
$R(g_c)$ is constant for every $c>0$.  
%
Lastly, we claim: 
\begin{equation}\label{eq:claim}
\text{If $R(g_c)$ is constant for some $c\neq 1$, then $\lvert A\rvert$ is constant.  }
\end{equation}
Subtracting $R(g)$ from $R(g_c)$, we obtain 
\begin{equation}\label{eq:aux}
R(g_c)-R(g)=\left( c^{-1}-1\right)\hat{R}-(c-1)\lvert A\rvert^2.  
\end{equation}
Let $\check{H}$ be a vector field on $B$ and $H$ its horizontal lift to $M$.  
Since both $R(g_c)$ and $R(g)$ are constant, 
differentiation of the both sides of \eqref{eq:aux} by $H$ yields $0=-(c-1)H\left(\lvert A\rvert^2\right)$.  
The assumption $c\neq 1$ and \eqref{eq:DiffH} imply 
$0=H\left(\lvert A\rvert^2\right)=\pi^* \check{H}(R(\check{g}))$.  
Since $\check{H}$ is arbitrary, $R(\check{g})$ is constant.  
Multiply the both sides of \eqref{eq:ONeill} by $c$ and subtract the resulting equation from \eqref{eq:ONeill'} to get 
\[
R(g_c) - c R(g) = (1-c) \pi^*R(\check{g}) + (c^{-1}-c)\hat{R}.  
\]
$\hat{R}$ is constant since $c^{-1}-c\neq 0$.  
This proves \eqref{eq:claim}.  
 
\end{proof}


Let $\pi: (M, g) \to (B, \check{g})$ be a Riemannian submersion.  
If $\Psi \in \Isom(M, g)$ and $\psi\in \Isom (B, \check{g})$ are isometries satisfying $\pi\circ\Psi=\psi\circ\pi$, 
then we call the pair $(\Psi, \psi)$ an automorphism of $\pi$.  
\begin{lemma}\label{lem:AutChar}
Let $\pi: (M, g) \to (B, \check{g})$ be a Riemannian submersion.  
We denote by $\mathcal{V}, \mathcal{H}: TM\to TM$ the projections onto $VM$, $HM$, respectively.  

If $(\Psi, \psi)$ is an automorphism of $\pi$, then 
\begin{align}\label{eq:HorVerComm}
\Psi_*\circ \mathcal{V} = \mathcal{V} \circ \Psi_*, &&\Psi_*\circ \mathcal{H} = \mathcal{H} \circ \Psi_*.  
\end{align}
In particular, $\Psi_*$ preserves $HM$ and $VM$.  

Conversely, if $\Psi: M\to M$ and $\psi: B\to B$ are diffeomorphisms satisfying $\pi\circ\Psi=\psi\circ\pi$, 
then $(\Psi, \psi)$ is an automorphism of $\pi$ if 
$\psi^*\check{g}=\check{g}$, \eqref{eq:HorVerComm} holds, and $g(\Psi_*v, \Psi_*v)=g(v, v)$ for all $v\in VM$.  
\end{lemma}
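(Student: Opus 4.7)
The plan is to exploit the orthogonal decomposition $TM=VM\oplus HM$ together with the defining property of a Riemannian submersion that $\pi_*$ restricts fiberwise to an isometry $HM\to TB$. I will handle the two directions separately.

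For the forward direction, I would first show that $\Psi_*$ maps $VM$ into itself: if $v\in VM$, meaning $\pi_*v=0$, then $\pi_*\Psi_*v=\psi_*\pi_*v=0$ by the intertwining relation $\pi\circ\Psi=\psi\circ\pi$, so $\Psi_*v\in VM$. Because $\Psi$ is a $g$-isometry and $HM=VM^{\perp g}$, it follows that $\Psi_*$ also preserves $HM$. Given an arbitrary $X\in TM$, write $X=\mathcal{V}X+\mathcal{H}X$; then $\Psi_*X=\Psi_*\mathcal{V}X+\Psi_*\mathcal{H}X$ is a decomposition into a vertical and a horizontal vector, and by uniqueness of the orthogonal splitting I can read off $\mathcal{V}\Psi_*X=\Psi_*\mathcal{V}X$ and $\mathcal{H}\Psi_*X=\Psi_*\mathcal{H}X$.

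For the reverse direction, I need to verify that $\Psi^*g=g$ (the condition $\pi\circ\Psi=\psi\circ\pi$ is given, and $\Psi$ is a diffeomorphism by hypothesis). Given $X\in TM$, decompose $X=\mathcal{V}X+\mathcal{H}X$ and use the assumed commutation relations to conclude that $\Psi_*X=\Psi_*\mathcal{V}X+\Psi_*\mathcal{H}X$ is again vertical-plus-horizontal. Because $VM\perp_g HM$, the squared norm $g(\Psi_*X,\Psi_*X)$ splits into two terms. The vertical term equals $g(\mathcal{V}X,\mathcal{V}X)$ directly by the assumption $g(\Psi_*v,\Psi_*v)=g(v,v)$ for $v\in VM$. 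The horizontal term I compute by pushing down to $B$: since $\Psi_*\mathcal{H}X$ is horizontal,
\begin{equation*}
g(\Psi_*\mathcal{H}X,\Psi_*\mathcal{H}X)=\check{g}(\pi_*\Psi_*\mathcal{H}X,\pi_*\Psi_*\mathcal{H}X)=\check{g}(\psi_*\pi_*\mathcal{H}X,\psi_*\pi_*\mathcal{H}X),
\end{equation*}
which coincides with $\check{g}(\pi_*\mathcal{H}X,\pi_*\mathcal{H}X)=g(\mathcal{H}X,\mathcal{H}X)$ because $\psi^*\check{g}=\check{g}$. Summing gives $g(\Psi_*X,\Psi_*X)=g(X,X)$ pointwise, and polarization upgrades this to $g(\Psi_*X,\Psi_*Y)=g(X,Y)$. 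Then $\psi=\pi\circ\Psi\circ\iota_{\mathrm{sec}}$-type considerations together with the given $\psi^*\check{g}=\check{g}$ confirm that $(\Psi,\psi)$ is an automorphism.

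I do not anticipate any serious obstacle; the argument is essentially bookkeeping of the splitting $TM=VM\oplus HM$. The only minor point to remember is to apply polarization at the end so that the hypothesis on diagonal vertical norms yields the full bilinear identity, and to keep separate the roles of the three hypotheses (one for each of the horizontal term, the vertical term, and the base-level consistency).
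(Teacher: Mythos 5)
Your proof is correct. The paper in fact states this lemma without giving a proof (treating it as routine), and your argument supplies exactly the expected one: in the forward direction, $\pi_*\Psi_*=\psi_*\pi_*$ gives preservation of $VM$ and the $g$-isometry property of $\Psi$ gives preservation of $HM=VM^{\perp g}$, whence the commutation relations by uniqueness of the orthogonal splitting; in the converse, the splitting of $g(\Psi_*X,\Psi_*X)$ into vertical and horizontal parts, the vertical norm hypothesis, the submersion identification of horizontal norms with $\check{g}$-norms combined with $\psi^*\check{g}=\check{g}$, and polarization yield $\Psi^*g=g$. The only cosmetic remark is that your closing ``$\psi=\pi\circ\Psi\circ\iota_{\mathrm{sec}}$-type considerations'' are unnecessary: $\psi\in\Isom(B,\check{g})$ is immediate from the hypotheses that $\psi$ is a diffeomorphism with $\psi^*\check{g}=\check{g}$.
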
 



%

\begin{lemma}\label{lem:orbitconst}
Let $\pi: (M, g) \to (B, \check{g})$ be a Riemannian submersion 
and $(\Psi, \psi)$ an automorphism of $\pi$.  
Then, the norm of O'Neill tensor for $\pi$ satisfies $\Psi^*\lvert A\rvert=\lvert A\rvert$.  
\end{lemma}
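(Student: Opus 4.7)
The plan is to combine three facts: $\Psi$ being an isometry preserves the Levi--Civita connection, the horizontal projection $\mathcal{H}$ commutes with $\Psi_{*}$ by Lemma \ref{lem:AutChar}, and $|A|^{2}$ is computed with respect to \emph{any} orthonormal frame of $HM$.

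Concretely, I would first observe the naturality of the $A$-tensor under $\Psi$. Let $x\in M$ and pick any horizontal tangent vectors $u,v\in H_{x}M$. Extend them to local horizontal vector fields $X,Y$ near $x$, and then push forward to vector fields $\Psi_{*}X,\Psi_{*}Y$ near $\Psi(x)$, which are again horizontal by \eqref{eq:HorVerComm}. Because $\Psi$ is an isometry it preserves the Levi--Civita connection, so $\Psi_{*}(\nabla_{X}Y)=\nabla_{\Psi_{*}X}\Psi_{*}Y$. Applying the vertical projection $\mathcal{V}$ and using $\Psi_{*}\circ\mathcal{V}=\mathcal{V}\circ\Psi_{*}$ from Lemma \ref{lem:AutChar}, I get
\begin{equation*}
\Psi_{*}\bigl(A_{X}Y\bigr)=\Psi_{*}\bigl(\mathcal{V}\nabla_{X}Y\bigr)=\mathcal{V}\nabla_{\Psi_{*}X}\Psi_{*}Y=A_{\Psi_{*}X}\Psi_{*}Y
\end{equation*}
at $\Psi(x)$. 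Since $A$ is a tensor, this identity holds pointwise and depends only on $u,v$, giving the naturality relation $\Psi_{*}(A_{u}v)=A_{\Psi_{*}u}\Psi_{*}v$.

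Next I would exploit this to compare norms. Choose an orthonormal basis $E_{1},\dots,E_{m}$ of $H_{x}M$. Because $\Psi$ is an isometry and $\Psi_{*}$ preserves $HM$ (again Lemma \ref{lem:AutChar}), the vectors $\Psi_{*}E_{1},\dots,\Psi_{*}E_{m}$ form an orthonormal basis of $H_{\Psi(x)}M$. Using the definition \eqref{eq:DefONeill} of $\lvert A\rvert^{2}$ at $\Psi(x)$ computed with this frame, the naturality from the previous step, and the fact that $\Psi_{*}$ is a linear isometry on $T_{x}M\to T_{\Psi(x)}M$, I obtain
\begin{equation*}
\lvert A\rvert^{2}(\Psi(x))=\sum_{i,j=1}^{m}g\bigl(A_{\Psi_{*}E_{i}}\Psi_{*}E_{j},A_{\Psi_{*}E_{i}}\Psi_{*}E_{j}\bigr)=\sum_{i,j=1}^{m}g\bigl(A_{E_{i}}E_{j},A_{E_{i}}E_{j}\bigr)=\lvert A\rvert^{2}(x),
\end{equation*}
which is precisely $\Psi^{*}\lvert A\rvert=\lvert A\rvert$.

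There is no serious obstacle; the only point that deserves care is the extension step in verifying naturality, which is harmless because $A$ is tensorial in both slots, so any choice of horizontal extensions yields the same value. Everything else is a routine application of the two parts of Lemma \ref{lem:AutChar} together with $\Psi$ being an isometry.
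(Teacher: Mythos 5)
Your proof is correct, and its overall structure is the same as the paper's: establish naturality of the $A$-tensor under $\Psi$, then push forward an orthonormal horizontal frame at $x$ to one at $\Psi(x)$ and compare the sums defining $\lvert A\rvert^2$. The one genuine difference is the naturality step: you obtain $\Psi_*(A_XY)=A_{\Psi_*X}\Psi_*Y$ from the fact that the isometry $\Psi$ preserves the Levi--Civita connection, writing $A_XY=\mathcal{V}\nabla_XY$, whereas the paper works directly with its definition \eqref{eq:DefONeill}, $2A_XY=\mathcal{V}[X,Y]$, and only needs that pushforward by a diffeomorphism commutes with the Lie bracket together with the first relation in \eqref{eq:HorVerComm}; the metric enters the paper's argument only in the final norm comparison. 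Your route is equally valid, but note that to match the paper's definition it implicitly uses the standard O'Neill identity $\mathcal{V}\nabla_XY=\tfrac{1}{2}\mathcal{V}[X,Y]$ for horizontal fields (equivalently, the alternation of $A$ on horizontal vectors), so the bracket-based argument is marginally more self-contained relative to \eqref{eq:DefONeill}, while yours has the small advantage of not needing to extend vectors to fields once tensoriality of $\mathcal{V}\nabla$ on horizontal arguments is observed.
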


\begin{proof}

Let $X_1$, $X_2$ be horizontal vector fields on $M$.  
Set $Y_1=\Psi_*X_1$, $Y_2=\Psi_*X_2$.  
The first relation in \eqref{eq:HorVerComm} implies 
\begin{align*}
2A_{Y_1}{Y_2}
&=2A_{\Psi_*X_1}\Psi_*X_2
=\mathcal{V}[\Psi_*X_1, \Psi_*X_2]
=\mathcal{V}\left(\Psi_*[X_1, X_2]\right)
=\Psi_*\left(\mathcal{V}[X_1, X_2]\right)\\
& =2\Psi_*\left(A_{X_1}X_2\right).  
\end{align*}
At points $x, y\in M$ such that $y=\Psi(x)$, it follows
\begin{align}\label{eq:AutAux}
\begin{split}
g_y(A_{Y_1}Y_2, A_{Y_1}Y_2)
&=g_y\left(\Psi_*(A_{X_1}X_2), \Psi_*(A_{X_1}X_2)\right)\\
&=g_x\left(A_{X_1}X_2, A_{X_1}X_2\right).  
\end{split}
\end{align}

Let $x\in M$ 
and $y=\Psi(x)$.  
We claim $\lvert A\rvert^2(x) = \lvert A\rvert^2(y)$.  
Take horizontal vector fields $E_1, \dots, E_m$ on $M$ such that $E_1(x), \dots, E_m(x)$ is an orthonormal basis of $HM$ at $x$.  
Define horizontal vector fields $F_1 = \Psi_*E_1, \dots, F_m=\Psi_*E_m$.  
Since $F_1(y), \dots, F_m(y)$ is an orthonormal basis of $HM$ at $y$, \eqref{eq:AutAux} implies
\begin{align*}
\lvert A\rvert^2(x)&=\sum_{i, j=1}^m g_x(A_{E_i}E_j, A_{E_i}E_j)
=\sum_{i, j=1}^m g_y(A_{F_i}F_j, A_{F_i}F_j)=\lvert A\rvert^2(y).  
\end{align*}
This proves $\Psi^*\lvert A\rvert=\lvert A\rvert$.  
\end{proof}

\subsection{Proof of Theorem \ref{Thm:irrep}}\label{sect:proofirrep}

Let $E\to B$ be a real vector bundle of rank $k+1\ge 1$, 
$\langle \cdot, \cdot \rangle$ a fiberwise inner product on $E$, 
and $UE=\{ s\in E \mid \langle s, s \rangle =1\}$ the unit sphere bundle of $(E, \langle \cdot, \cdot \rangle)$.  
With the restriction map $\pi: UE\to B$ of the projection $E\to B$, 
$UE$ is a $S^k$-bundle with the orthogonal group as structure group: 
\[
\Orth(k+1)\lact S^k \to UE \xrightarrow{\pi} B.  
\]
Let $\check{g}$ be a Riemannian metric on $B$ and $\nabla$ a metric connection on $(E, \langle \cdot, \cdot \rangle)$.   
We define the connection metric $g=\check{g}\oplus \overset{\circ}{g}_k$ on $UE$ 
with respect to the Ehresmann connection induced by $\nabla$.  
With the metric $g$, $\pi$ is a Riemannian submersion with totally geodesic fibers: 
\[
\Orth(k+1)\overset{\text{isom}}{\lact} S^k(1) \to (UE, g) \xrightarrow{\pi} (B, \check{g}).  
\]
Define the fiberwise symmetric bilinear form $\xi$ on the vector bundle $E$ by 
\begin{align}\label{eq:DefXi}
\xi(s_1, s_2)=\sum_{i, j=1}^m\langle R^{\nabla}(\check{E}_i, \check{E}_j)s_1, R^{\nabla}(\check{E}_i, \check{E}_j)s_2\rangle
\end{align}
for $s_1, s_2 \in E$.  
Here, $R^{\nabla}$ 
is the curvature of $\nabla$, 
$m=\dim B$, 
and $\check{E}_1, \dots, \check{E}_m$ is an orthonormal basis for $(B, \check{g})$.  
$\xi$ does not depend on the choice of $\check{E}_1, \dots, \check{E}_m$.  
$\xi=0$ if and only if $\nabla$ is flat.  
The following formula appears in \cite[p. 279]{GSW}.  See also \cite{BL}, \cite[\S 9]{Bla} for proof.  

\begin{lemma}\label{lem:ScalSasaki}
The norm $\lvert A\rvert$ of O'Neill tensor for $\pi$ satisfies 
\begin{equation}
\lvert A\rvert^2(s)=\frac{1}{4}\xi(s, s)
\end{equation}
for every $s\in UE$.  
\end{lemma}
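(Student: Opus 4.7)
The plan is to reduce the lemma to the classical identification between the curvature of a linear connection $\nabla$ on a vector bundle and the vertical part of the Lie bracket of horizontal lifts. Specifically, for any vector fields $\check{X}, \check{Y}$ on $B$ with horizontal lifts $\tilde{X}, \tilde{Y}$ to $E$ (for the Ehresmann connection induced by $\nabla$), one has
\[
\mathcal{V}[\tilde{X}, \tilde{Y}]_{s} \;=\; -\,R^{\nabla}(\check{X}, \check{Y})s
\]
at every $s \in E$, where the right-hand side is interpreted as a vertical tangent vector via the canonical isomorphism $E_{\pi(s)} \cong V_{s} E$. I would either quote this from a standard reference on connections in vector bundles or verify it by a short computation in a local frame trivialization of $E$.

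Next, restrict from $E$ to $UE$. Because $\nabla$ is a metric connection, $R^{\nabla}(\check{X}, \check{Y})$ is skew-adjoint with respect to $\langle\cdot, \cdot\rangle$, so $R^{\nabla}(\check{X}, \check{Y})s$ lies in $s^{\perp} \subset E_{\pi(s)}$, which is canonically identified with $V_{s} UE$. Since the fiber metric on $UE$ is the round unit-sphere metric $\overset{\circ}{g}_{k}$, this identification is a linear isometry. Combined with $A_{\tilde{X}}\tilde{Y} = \tfrac{1}{2}\mathcal{V}[\tilde{X}, \tilde{Y}]$, this gives, at every $s \in UE$,
\[
g(A_{\tilde{X}}\tilde{Y}, A_{\tilde{X}}\tilde{Y})(s) \;=\; \tfrac{1}{4}\,\langle R^{\nabla}(\check{X}, \check{Y})s,\, R^{\nabla}(\check{X}, \check{Y})s\rangle.
\]
Choosing a local orthonormal frame $\check{E}_{1}, \ldots, \check{E}_{m}$ on $B$ around $\pi(s)$ with horizontal lifts $\tilde{E}_{1}, \ldots, \tilde{E}_{m}$ (which furnish a horizontal orthonormal frame at $s$), summing over $i, j$, and comparing with the definition \eqref{eq:DefXi} of $\xi$, yields
\[
\lvert A\rvert^{2}(s) \;=\; \sum_{i,j=1}^{m} g(A_{\tilde{E}_{i}}\tilde{E}_{j}, A_{\tilde{E}_{i}}\tilde{E}_{j})(s) \;=\; \tfrac{1}{4}\,\xi(s, s).
\]

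The only nontrivial point is the careful bookkeeping of canonical identifications --- between $E_{\pi(s)}$ and $V_{s} E$, its restriction to $s^{\perp}$ and $V_{s} UE$, and the verification that the round metric $\overset{\circ}{g}_{k}$ is precisely what makes this last restriction an isometry. The sign in the curvature--bracket identity is convention-dependent but immaterial here, since $\lvert A\rvert^{2}$ only sees the norm squared. Once the identifications are pinned down, the lemma follows by the single-line computation above.
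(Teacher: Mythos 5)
Your argument is correct, and it is essentially the standard derivation: the paper itself gives no written proof of Lemma \ref{lem:ScalSasaki} but defers to \cite{GSW}, \cite{BL}, \cite{Bla}, whose proofs rest on exactly the identity you use, namely that the vertical part of the bracket of horizontal lifts at $s\in E$ is $\pm R^{\nabla}(\check X,\check Y)s$ under the identification $V_sE\cong E_{\pi(s)}$. Your handling of the restriction to $UE$ is also sound: metric compatibility of $\nabla$ makes the horizontal distribution tangent to $UE$ and makes $R^{\nabla}(\check X,\check Y)s$ orthogonal to $s$, and the fiber metric $\overset{\circ}{g}_k$ is precisely the metric induced by $\langle\cdot,\cdot\rangle$ on the unit sphere, so the computation $g(A_{\tilde E_i}\tilde E_j,A_{\tilde E_i}\tilde E_j)(s)=\tfrac14\langle R^{\nabla}(\check E_i,\check E_j)s,R^{\nabla}(\check E_i,\check E_j)s\rangle$ and the summation over the lifted orthonormal frame give exactly $\lvert A\rvert^2(s)=\tfrac14\xi(s,s)$, the sign ambiguity being irrelevant as you note.
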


\begin{proof}[Proof of Throrem \ref{Thm:irrep}]
Let $(B, \check{g})=G/H$ be a Riemannian homogeneous space.  
For an irreducible orthogonal representation $\phi: H\to \SO(k+1)$ of $H$ on $\real^{k+1}$, 
define the vector bundle $E=G\times_{\phi}\real^{k+1}\to B$ 
with fiberwise inner product $\langle \cdot, \cdot \rangle$.  
The action of $G$ on $B$ lifts to $E$, and $\langle \cdot, \cdot \rangle$ is invariant under this action.  
That is, 
\begin{equation}\label{eq:Auxil1}
\langle a_*s_1, a_*s_2\rangle=\langle s_1, s_2\rangle
\end{equation}
for $a\in G$.  
In particular, the lifted action of $G$ on $E$ preserves $UE$.  

Let $\omega$ be a $G$-invariant principal $H$-connection on $G\to B$.  
The associated metric connection $\nabla$ on $E$ is invariant under the action of $G$.  
The curvature $R^{\nabla}$ is thus invariant under $G$.  That is, 
\begin{equation}\label{eq:Auxil2}
a_*\left(R^{\nabla}(\check{X}, \check{Y})s\right)=R^{\nabla}(a_*\check{X}, a_*\check{Y})a_*s  
\end{equation}
for $a\in G$.  

Define $\xi$ by \eqref{eq:DefXi}.  
Let $V$ be the fiber of $E$ over the coset $o=eH$, where $e$ is the identity element of $G$.  
Denote by $\xi_o$ and $\langle \cdot, \cdot \rangle_o$ 
the restrictions of $\xi$ and $\langle \cdot, \cdot \rangle$ to $V$, respectively.  
$\xi_o$ is invariant under the restricted action of $H$ on $V$.  
Indeed, 
\begin{align*}
\xi_o(a_*s_1, a_*s_2)
&=\sum_{i, j=1}^m\langle R^{\nabla}(\check{E}_i, \check{E}_j)a_*s_1, R^{\nabla}(\check{E}_i, \check{E}_j)a_*s_2\rangle_o\\
&=\sum_{i, j=1}^m\langle a_* \left(R^{\nabla}(a^{-1}_*\check{E}_i, a^{-1}_*\check{E}_j)s_1\right), a_* \left(R^{\nabla}(a^{-1}_*\check{E}_i, a^{-1}_*\check{E}_j)s_2\right)\rangle_o &&\because\eqref{eq:Auxil2}\\
&=\sum_{i, j=1}^m\langle R^{\nabla}(a^{-1}_*\check{E}_i, a^{-1}_*\check{E}_j)s_1, R^{\nabla}(a^{-1}_*\check{E}_i, a^{-1}_*\check{E}_j)s_2\rangle_o &&\because\eqref{eq:Auxil1}\\
&=\sum_{i, j=1}^m\langle R^{\nabla}(\check{E}_i, \check{E}_j)s_1, R^{\nabla}(\check{E}_i, \check{E}_j)s_2\rangle_o
=\xi_o(s_1, s_2)
\end{align*}
for $a\in H$ since $\xi$ is independent of the orthonormal bases at $o$ chosen.  
Since the representation $H\to \SO(V)$ is irreducible, 
it follows from Schur's lemma that $\xi_o$ and $\langle \cdot, \cdot \rangle_o$ are proportional.  

Let $g=\check{g}\oplus \overset{\circ}{g}_k $ be the connection metric on $UE$ 
with respect to the Ehresmann connection induced by $\nabla$.  
Lemma \ref{lem:ScalSasaki} implies that $\lvert A\rvert$, 
the norm of O'Neill tensor for $\pi: (UE, g)\to (B, \check{g})$, is constant on $\pi^{-1}(o)=UE \cap V$.  
On the other hand, it follows from Lemma \ref{lem:AutChar} that $G$ acts on $UE$ by automorphisms of the Riemannian submersion $(UE, g)\to (B, \check{g})$.  
Since $G$ acts on $UE$ fiber-transitively, we conclude from Lemma \ref{lem:orbitconst} that 
$\lvert A\rvert$ is constant on $UE$.  

Set $a:=\lvert A\rvert$.  
The O'Neill formula \eqref{eq:ONeill'} implies that 
the connection metric 
$g(r) = \check{g} \oplus r^2 \overset{\circ}{g}_k$ for each $r>0$, with respect to the same Ehresmann connection as $g$, 
has constant scalar curvature $R(\check{g})+k(k-1)/r^2-a^2r^2$.  
By construction, the projection 
$(UE, g(r))\to (B, \check{g})$ is a Riemannian submersion 
with totally geodesic fibers, and the typical fiber is isometric to the round sphere $S^k(r)$ (cf. Sect.\ \ref{sect:ConnMet}).  
\end{proof}

\subsection{Fiberwise join of sphere bundles}\label{sect:FiberJoin}


For $i=1, 2$, let $S^{k_i}$ be the sphere of dimension $k_i$ 
and $\Orth(k_i+1)\lact S^{k_i}$ be the transitive action ($k_1, k_2 \ge 0$).  
We introduce the differential structure on their join 
\[
S^{k_1}*S^{k_2}=\frac{S^{k_1}\times [0, T]\times S^{k_2}}{0\times S^{k_1}, \ T\times S^{k_2}}
\]
through polar coordinates.  
The action $\Orth(k_1+1)\times \Orth(k_2+1) \lact S^{k_1}\times [0, T]\times S^{k_2}$ 
descends smoothly to $S^{k_1}*S^{k_2}$, 
and $S^{k_1} * S^{k_2}$ is equivariantly diffeomorphic to $S^{k_1+k_2+1}$.  
Let 
\begin{equation}
\hat{\rho}: S^{k_1}\times (0, T)\times S^{k_2} \to S^{k_1}*S^{k_2}
\end{equation}
be the composition map 
$S^{k_1}\times (0, T) \times S^{k_2} \hookrightarrow S^{k_1} \times [0, T] \times S^{k_2} \twoheadrightarrow S^{k_1} * S^{k_2}$.  
$\hat{\rho}$ is an $\Orth(k_1+1)\times \Orth(k_2+1)$-equivariant injective 
local diffeomorphism whose image is dense in $S^{k_1}*S^{k_2}$.  
The following is well-known (cf.\ \cite[pp. 213--214]{KW}, \cite[4.6]{Bes1}, \cite[3.4--4.1]{Peter}).  
%
\begin{lemma}
Let $f_i: (0, T)\to \real$ be a strictly positive $C^{\infty}$ function for $i=1, 2$, 
and consider the doubly warped product metric 
$f_1^2(t) \overset{\circ}{g}_{k_1}+dt^2 +f_2^2(t) \overset{\circ}{g}_{k_2}$ on $S^{k_1}\times (0, T) \times S^{k_2}$.  

The scalar curvature of this metric 
is equal to 
\begin{align}\label{eq:ScalJoin}
-2k_1\frac{f_1''}{f_1}+k_1(k_1-1)\frac{1-(f_1')^2}{f_1^2}
-2k_2\frac{f_2''}{f_2}+k_2(k_2-1)\frac{1-(f_2')^2}{f_2^2}
-2k_1k_2\frac{f_1'f_2'}{f_1f_2}
\end{align}
on $S^{k_1} \times (0, T) \times S^{k_2}$.  

This metric 
is the pullback by $\hat{\rho}$ of 
an $\Orth(k_1+1)\times \Orth(k_2+1)$-invariant $C^{\infty}$ Riemannian metric on $S^{k_1}*S^{k_2}$ 
if and only if $f_1$, $f_2$ extend to $C^{\infty}$ functions on $[0, T]$ and satisfy
\begin{equation}\begin{split}\label{eq:BC}
f_1(0)>0, \ f_1^{(2l-1)}(0)=0, 
\qquad &f_1(T)=0, \ f_1'(T)=-1, \ f_1^{(2l)}(T)=0, \\
f_2(T)>0, f_2^{(2l-1)}(T)=0, 
\qquad &f_2(0)=0, \ f_2'(0)=1, \ f_2^{(2l)}(0)=0
\end{split}\end{equation}
for every $l\ge 1$.  
%

\end{lemma}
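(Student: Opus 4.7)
The plan is to handle the two halves of the lemma separately: first derive the scalar-curvature formula \eqref{eq:ScalJoin} by a direct computation on the doubly warped product, and second obtain the smoothness criterion \eqref{eq:BC} by reducing the two-sided collapsing picture to the question of smooth extension of a rotationally symmetric metric across the origin of a disc.

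For the scalar curvature, I would work in a local orthonormal frame of the form $X_a = f_1^{-1}e_a$ ($a=1,\dots,k_1$), $T=\partial_t$, and $Y_\alpha = f_2^{-1}\tilde e_\alpha$ ($\alpha=1,\dots,k_2$), where $\{e_a\}$ and $\{\tilde e_\alpha\}$ are local orthonormal frames on $(S^{k_1},\overset{\circ}{g}_{k_1})$ and $(S^{k_2},\overset{\circ}{g}_{k_2})$. The Bishop--O'Neill warped-product formulas, applied once with warping function $f_1$ and once with $f_2$, give the sectional curvatures of the coordinate $2$-planes: the radial-sphere planes contribute $K(T,X_a)=-f_1''/f_1$ and $K(T,Y_\alpha)=-f_2''/f_2$; the intra-sphere planes contribute $K(X_a,X_b)=(1-(f_1')^2)/f_1^2$ and $K(Y_\alpha,Y_\beta)=(1-(f_2')^2)/f_2^2$; the cross-sphere planes contribute $K(X_a,Y_\alpha)=-f_1'f_2'/(f_1f_2)$. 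Summing twice over $i<j$ with multiplicities $k_i$, $k_i(k_i-1)/2$, and $k_1k_2$ respectively yields \eqref{eq:ScalJoin}.

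For the smoothness characterisation, observe that $S^{k_1}*S^{k_2}$ is obtained from $S^{k_1}\times[0,T]\times S^{k_2}$ by collapsing $S^{k_2}$ at $t=0$ and $S^{k_1}$ at $t=T$. I would show that an open neighbourhood $U_0$ of the collapsed locus $S^{k_1}\times\{0\}\subset S^{k_1}*S^{k_2}$ is diffeomorphic to $S^{k_1}\times B^{k_2+1}$, where $B^{k_2+1}$ is a small open ball equipped with polar coordinates $(t,\tilde e)\in[0,\epsilon)\times S^{k_2}$. On $U_0\setminus(S^{k_1}\times\{0\})$ the pullback metric reads $f_1^2(t)\overset{\circ}{g}_{k_1}+dt^2+f_2^2(t)\overset{\circ}{g}_{k_2}$, and smoothness across $t=0$ splits into two independent conditions. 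First, the fibre factor $f_1^2(t)\overset{\circ}{g}_{k_1}$ on $S^{k_1}$ is a scalar function of the radial coordinate $t$, and extends smoothly across the centre of $B^{k_2+1}$ if and only if $f_1^2$ is a smooth function of $t^2$, equivalently $f_1^{(2l-1)}(0)=0$ for every $l\ge 1$; nondegeneracy requires $f_1(0)>0$. Second, the radial factor $dt^2+f_2^2(t)\overset{\circ}{g}_{k_2}$ is the standard polar form of a rotationally invariant metric on $B^{k_2+1}$, whose smoothness at the origin is the classical criterion $f_2(0)=0$, $f_2'(0)=1$, $f_2^{(2l)}(0)=0$ (as recorded in the cited references \cite{KW}, \cite{Bes1}, \cite{Peter}). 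The symmetric argument at $t=T$, with $t$ replaced by $T-t$ (which accounts for the sign in $f_1'(T)=-1$), produces the remaining conditions in \eqref{eq:BC}.

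The routine-but-delicate step is the smoothness half: one must carefully verify the tubular-neighbourhood identification $U_0\cong S^{k_1}\times B^{k_2+1}$ and then separate the "base" factor $f_1^2(t)\overset{\circ}{g}_{k_1}$ from the "radial" factor $dt^2+f_2^2(t)\overset{\circ}{g}_{k_2}$, tracking which derivative conditions correspond to smooth extension of each tensorial component across the centre of the disc. The scalar curvature computation is essentially mechanical and could instead be quoted from the literature on multiply warped products; the smoothness criterion reduces to applying the classical even-function-of-the-radial-distance lemma once on each side of $[0,T]$.
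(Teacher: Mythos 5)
Your proof is correct and follows essentially the route the paper itself relies on: the paper states this lemma without proof, citing Kazdan--Warner, Besse, and Petersen, and those references supply exactly your two ingredients --- the sectional-curvature bookkeeping for doubly warped products (radial, intra-sphere, and cross planes with multiplicities $k_i$, $k_i(k_i-1)/2$, $k_1k_2$, which indeed reproduces \eqref{eq:ScalJoin}, e.g.\ giving $(k_1+k_2)(k_1+k_2+1)$ for $f_1=\cos t$, $f_2=\sin t$) and the classical smooth-extension criteria at a polar collapse. Your identification of a neighborhood of the collapsed locus at $t=0$ with $S^{k_1}\times B^{k_2+1}$, splitting smoothness into the even-radial-function condition on $f_1^2$ (hence $f_1^{(2l-1)}(0)=0$, $f_1(0)>0$ for nondegeneracy) and the classical condition $f_2(0)=0$, $f_2'(0)=1$, $f_2^{(2l)}(0)=0$ for the rotationally symmetric block $dt^2+f_2^2(t)\overset{\circ}{g}_{k_2}$, together with the symmetric argument at $t=T$, is precisely the intended argument.
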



Let $E_i \to B$ be a real vector bundle of rank $k_i+1\ge 1$, 
$\langle \cdot, \cdot \rangle_i$ a fiberwise inner product on $E_i$, 
and $\nabla^i$ a metric connection on $\left( E_i, \langle \cdot, \cdot \rangle_i \right)$ for $i=1, 2$.  
%
%
For a Riemannian metric $\check{g}$ on $B$, 
define the connection metric $g_i=\check{g} \oplus \overset{\circ}{g}_{k_i}$ 
on the unit sphere bundle $\pi_i: UE_i\to B$ 
with respect to the Ehresmann connection induced by $\nabla^i$ 
as in Sect.\ \ref{sect:proofirrep}.  
We introduce the fiberwise join of $\pi_1: (UE, g_1) \to (B, \check{g})$ and $\pi_2: (UE, g_2) \to (B, \check{g})$ as follows.  

Take the direct sum connection $\nabla^1\oplus \nabla^2$ on $E_1\oplus E_2$ 
and consider the induced Ehresmann connection $HM$ on the unit sphere bundle $\pi: M=U(E_1\oplus E_2)\to B$.  
For an $\Orth(k_1+1)\times \Orth(k_2+1)$-invariant metric $\hat{g}$ on $S^{k_1+k_2+1}\cong S^{k_1}*S^{k_2}$, 
define the connection metric $g=\check{g}\oplus \hat{g}$ on $U(E_1\oplus E_2)$ with respect to $HM$.  
%
%
The fiberwise join of $\pi_1$ and $\pi_2$ with respect to $\hat{g}$ is the Riemannian submersion $\pi : \left(U(E_1\oplus E_2), g\right)\to (B, \check{g})$ 
with totally geodesic fibers.  
Summarizing the notations, 
\[
\Orth(k_1+1) \times \Orth(k_2+1) \overset{\text{isom}}{\lact} (S^{k_1} * S^{k_1}, \hat{g})
\to (U(E_1\oplus E_2), g)
\xrightarrow{\pi} (B, \check{g}).  
\]

Equivalently, we can describe the fiberwise join in terms of principal bundles.  
Consider (1) the orthonormal frame bundle $P_i \to B$ of $(E_i, \langle \cdot, \cdot \rangle)$ and principal connection $\omega_i$ associated with $\nabla^i$, 
(2) their fiberwise product $P=P_1 \underset{B}{\times} P_2\to B$ and $\omega=\omega_1\underset{B}{\times}\omega_2$, 
and (3) the bundle $UE_1 \underset{B}{*} UE_2:=P\times_{\Orth(k_1+1)\times \Orth(k_2+1)} (S^{k_1}*S^{k_2})\to B$ 
and Ehresmann conection associated with $(P, \omega)$.  
The unit sphere bundle $\pi: U(E_1\oplus E_2)\to B$ with $HM$ 
is isomorphic to $UE_1 \underset{B}{*} UE_2\to B$ with this Ehresmann connection.  
Symbolically, 
\begin{equation*}
M=U(E_1\oplus E_2) \cong UE_1 \underset{B}{*} UE_2.  
\end{equation*}


\begin{lemma}\label{lem:PropRhoJoin}
The equivariant map $\hat{\rho}: S^{k_1} \times (0, T)\times S^{k_2} \to S^{k_1}*S^{k_2}$ induces 
a map 
\[
\rho: (0, T) \times \left( UE_1 \underset{B}{\times} UE_2 \right) \to UE_1 \underset{B}{*} UE_2.  
\]
$\rho$ is an injective local diffeomorphism whose image is dense in $UE_1 \underset{B}{*} UE_2$.  
\end{lemma}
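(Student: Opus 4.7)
The plan is to construct $\rho$ via the associated bundle picture and then reduce each assertion to the corresponding fiberwise property of $\hat{\rho}$.

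First I would define $\rho$ using the principal bundle description. Let $K=\Orth(k_1+1)\times \Orth(k_2+1)$ and $P=P_1\underset{B}{\times}P_2$, so that $UE_1\underset{B}{\times}UE_2=P\times_K(S^{k_1}\times S^{k_2})$ and $UE_1\underset{B}{\ast}UE_2=P\times_K(S^{k_1}\ast S^{k_2})$. Since the $K$-action on $S^{k_1}\times(0,T)\times S^{k_2}$ is trivial on the $(0,T)$-factor, we have the identification
\begin{equation*}
(0,T)\times\left(UE_1\underset{B}{\times}UE_2\right)\;\cong\;P\times_K\bigl(S^{k_1}\times(0,T)\times S^{k_2}\bigr).
\end{equation*}
The $K$-equivariance of $\hat{\rho}$ stated just before this lemma means that $\id_P\times\hat{\rho}:P\times(S^{k_1}\times(0,T)\times S^{k_2})\to P\times(S^{k_1}\ast S^{k_2})$ descends to a well-defined map between the associated bundles; this is my definition of $\rho$.

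Next I would verify the three properties by reducing each to a property of $\hat{\rho}$ on the typical fiber. Working in a local trivialization $U\subset B$ of $P$, the map $\rho$ is identified with $\id_U\times\hat{\rho}:U\times(S^{k_1}\times(0,T)\times S^{k_2})\to U\times(S^{k_1}\ast S^{k_2})$. Since $\hat{\rho}$ is a local diffeomorphism onto its image by the cited lemma, so is $\id_U\times\hat{\rho}$, and patching over a cover trivializing $P$ shows $\rho$ is a local diffeomorphism globally. Injectivity is likewise local-to-global: two points of the domain of $\rho$ that map to the same point of $UE_1\underset{B}{\ast}UE_2$ must lie over the same base point, and within a single fiber $\rho$ is identified with $\hat{\rho}$, which is injective. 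For density, the image of $\rho$ contains the fiberwise image of $\hat{\rho}$, which is dense in each fiber of $UE_1\underset{B}{\ast}UE_2\to B$; since the projection onto $B$ is open, this fiberwise density implies density of the image of $\rho$ in $UE_1\underset{B}{\ast}UE_2$.

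The only step requiring a bit of care is checking that the associated-bundle construction applied to an equivariant \emph{open embedding} (rather than a diffeomorphism) genuinely produces a globally defined map with the desired properties, but this is routine because the equivariance of $\hat{\rho}$ exactly ensures compatibility of the transition functions of $P$ with $\hat{\rho}$; no real obstacle appears beyond this bookkeeping.
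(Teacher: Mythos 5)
Your construction of $\rho$ via the associated-bundle picture, using the $K$-equivariance of $\hat{\rho}$ and the triviality of the $K$-action on the $(0,T)$-factor, is exactly how the paper defines $\rho$ (via the commuting square of quotient maps), and the paper likewise deduces injectivity, the local diffeomorphism property, and density of the image directly from the corresponding properties of $\hat{\rho}$. Your proposal is correct and takes essentially the same route, merely spelling out in local trivializations the verification that the paper leaves as a one-line conclusion.
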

\begin{proof}
Define $\rho$ so that the diagram
\begin{align*}
\xymatrix{
P\times S^{k_1} \times (0, T)\times S^{k_2}\ar[r]^-{\id \times \hat{\rho}}\ar[d]&P\times (S^{k_1}*S^{k_2})\ar[d]\\
(0, T)\times \left(UE_1 \underset{B}{\times} UE_2\right) \ar[r]^-{\rho}& UE_1 \underset{B}{*} UE_2  
}
\end{align*}
commutes.  
Here, $UE_1 \underset{B}{\times} UE_2=P\times_{\Orth(k_1+1)\times \Orth(k_2+1)} (S^{k_1}\times S^{k_2}) $ is the fiberwise product.  
$\rho$ is well-defined since $\hat{\rho}$ is $\Orth(k_1+1)\times \Orth(k_2+1)$-equivariant.  
Here, the vertical arrows are quotient maps with respect to the diagonal actions of $\Orth(k_1+1)\times \Orth(k_2+1)$.  
Note that the associated bundle $P\times_{\Orth(k_1+1)\times \Orth(k_2+1)}\left( S^{k_1} \times (0, T)\times S^{k_2}\right)$ 
is canonically isomorphic to $(0, T)\times \left(UE_1 \underset{B}{\times} UE_2\right)$, 
for $\Orth(k_1+1)\times \Orth(k_2+1)$ acts on $(0, T)$ trivially.  
Since $\hat{\rho}$ is an injective local diffeomorphism with dense image, so is $\rho$.  
\end{proof}
%
%
%
%

\begin{lemma}\label{lem:Joinf1f2}
Let $\hat{g}$ be an $\Orth(k_1+1)\times \Orth(k_2+1)$-invariant metric on $S^{k_1}*S^{k_2}$ 
such that $\hat{\rho}^*\hat{g}=f_1^2(t) \overset{\circ}{g}_{k_1}+dt^2 +f_2^2(t) \overset{\circ}{g}_{k_2}$ 
on $S^{k_1}\times (0, T) \times S^{k_2}$.  
Consider the 
fiberwise join $\pi : \left(UE_1 \underset{B}{*} UE_2, g\right)\to (B, \check{g})$ with respect to $\hat{g}$.  

Then, the pullback metric $\rho^*g$ on $(0, T) \times \left( UE_1 \underset{B}{\times} UE_2 \right)$ is equal to the connection metric 
$\check{g}\oplus \left(f_1^2(t) \overset{\circ}{g}_{k_1}+dt^2 +f_2^2(t) \overset{\circ}{g}_{k_2}\right)$ 
with respect to the Ehresmann associated with $\omega$.    
%
Consequently, the scalar curvature of $\rho^*g$ satisfies
\begin{equation}\label{eq:ODE}
\begin{split}
R(\rho^*g)-(\pi\circ\rho)^*R(\check{g})&=
-2k_1\frac{f_1''}{f_1}+k_1(k_1-1)\frac{1-(f_1')^2}{f_1^2}-\lvert A^1\rvert^2f_1^2\\
&\quad-2k_2\frac{f_2''}{f_2}+k_2(k_2-1)\frac{1-(f_2')^2}{f_2^2}-\lvert A^2\rvert^2f_2^2
-2k_1k_2\frac{f_1'f_2'}{f_1f_2}
\end{split}
\end{equation}
on $(0, T) \times \left( UE_1 \underset{B}{\times} UE_2 \right)$.  
Here, $\lvert A^i\rvert$ is the norm of O'Neill tensor 
for $\pi_i$.  
\end{lemma}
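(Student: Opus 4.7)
The plan is to establish two things in turn: first, the structural identity $\rho^*g = \check{g} \oplus \bigl(f_1^2(t)\overset{\circ}{g}_{k_1} + dt^2 + f_2^2(t)\overset{\circ}{g}_{k_2}\bigr)$ of connection metrics; second, the scalar curvature formula \eqref{eq:ODE}, which I would obtain from the O'Neill formula \eqref{eq:ONeill} by identifying each of its three summands.

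For the first assertion, I would exploit the associated-bundle description from Lemma \ref{lem:PropRhoJoin}. Write $\hat{g}^* := f_1^2(t)\overset{\circ}{g}_{k_1} + dt^2 + f_2^2(t)\overset{\circ}{g}_{k_2}$. Both the target $UE_1 \underset{B}{*} UE_2$ and the source $(0,T) \times (UE_1 \underset{B}{\times} UE_2)$ are associated bundles over $B$ of the same principal bundle $P = P_1 \underset{B}{\times} P_2$ equipped with the same principal connection $\omega = \omega_1 \underset{B}{\times} \omega_2$; consequently the horizontal distributions induced by $\omega$ correspond under $\rho_*$. The map $\rho$ itself is the descent of $\id \times \hat{\rho}$, and the hypothesis $\hat{\rho}^*\hat{g} = \hat{g}^*$ says precisely that $\hat{\rho}$ is a local isometry of the fiber metrics. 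Passing to the quotient by $\Orth(k_1+1) \times \Orth(k_2+1)$ turns this into the statement that $\rho$ is a local isometry on vertical bundles, and since both sides are connection metrics with the same base $\check{g}$ and the same principal connection, one concludes $\rho^*g = \check{g} \oplus \hat{g}^*$.

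For the second assertion, apply \eqref{eq:ONeill} to the Riemannian submersion $\pi \circ \rho$. The term $(\pi \circ \rho)^* R(\check{g})$ is immediate, and the intrinsic scalar curvature $\hat{R}$ of the fiber metric $\hat{g}^*$ on $(0,T) \times S^{k_1} \times S^{k_2}$ is precisely \eqref{eq:ScalJoin} from the previous lemma. The nontrivial step is the computation of $|A|^2$ for the fiberwise-join submersion. Horizontal lifts from $B$ have no $\partial_t$ component, so for horizontal vector fields $X, Y$ the bracket $[X, Y]$ is tangent to $S^{k_1} \times S^{k_2}$; moreover, since the source is over $B$ the fiber product of $UE_1$ and $UE_2$ with respect to $\omega_1 \underset{B}{\times} \omega_2$, the horizontal lift of $\check{X}$ splits pointwise into horizontal lifts $X_1, X_2$ to $UE_1$ and $UE_2$. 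Hence $\mathcal{V}[X,Y] = \mathcal{V}_1[X_1, Y_1] \oplus \mathcal{V}_2[X_2, Y_2]$. Since $\hat{g}^*$ rescales $\overset{\circ}{g}_{k_i}$ by $f_i^2(t)$, summing $|A_{E_i}E_j|^2$ over an orthonormal horizontal frame gives $|A|^2 = f_1^2(t) |A^1|^2 + f_2^2(t) |A^2|^2$, and substituting into \eqref{eq:ONeill} yields \eqref{eq:ODE}. The main obstacle I anticipate is the careful verification of the splitting of $\mathcal{V}[X,Y]$ — in particular that there is no $\partial_t$-component and no cross-term mixing the two sphere directions — so that $|A|^2$ emerges as a clean sum weighted by the warping factors $f_i^2(t)$.
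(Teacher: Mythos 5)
Your proposal is correct and follows essentially the same route as the paper: the identification $\rho^*g=\check{g}\oplus\bigl(f_1^2\overset{\circ}{g}_{k_1}+dt^2+f_2^2\overset{\circ}{g}_{k_2}\bigr)$ via $\rho$ preserving horizontal/vertical bundles and their lengths, followed by the O'Neill formula \eqref{eq:ONeill} together with \eqref{eq:ScalJoin} and the key identity $\lvert A\rvert^2=f_1^2\lvert A^1\rvert^2+f_2^2\lvert A^2\rvert^2$, which the paper also obtains from the decomposition $A_XY=A^1_XY+A^2_XY$ with $A^i_XY$ tangent to the fibers of $\pi_i$ (hence no $dt$-component) and mutually orthogonal. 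The splitting you flag as the delicate point is exactly the step the paper asserts and uses, so your argument matches theirs in substance.
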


\begin{proof}
$\rho^*\tilde{g}=\check{g}\oplus \left( f_1^2(t) \overset{\circ}{g}_{k_1}+dt^2 +f_2^2(t) \overset{\circ}{g}_{k_2} \right)$ 
since 
the map $\rho$ preserves the horizontal and vertical bundles 
as well as the lengths of horizontal and vertical vectors, respectively.  

We prove \eqref{eq:ODE}.  
Let $\lvert A\rvert$ be the norm of O'Neill tensor for the Riemannian submersion $\pi\circ\rho: (0, T) \times \left( UE_1 \underset{B}{\times} UE_2 \right)\to B$.  
In view of the O'Neill formula \eqref{eq:ONeill} for scalar curvature and \eqref{eq:ScalJoin}, 
we have only to show 
\begin{equation}\label{eq:ONeillJoin}
\lvert A\rvert^2=\lvert A^1\rvert^2 f_1^2+\lvert A^2\rvert^2 f_2^2.  
\end{equation}
%
Take basic vector fields $X$, $Y$ on $(0, T) \times \left( UE_1 \underset{B}{\times} UE_2 \right)$.  
Since $A_XY=A^1_XY+A^2_XY$, $A^1_XY\perp A^2_XY$, and $A^i_XY$ is tangent to the fibers of $\pi_i: UE_i\to B$, 
there holds
\begin{align*}
(\rho^*g)(A_XY, A_XY)
&=\left(f_1^2 \overset{\circ}{g}_{k_1}+dt^2 +f_2^2 \overset{\circ}{g}_{k_2}\right)(A_XY, A_XY)\\
&=f_1^2\overset{\circ}{g}_{k_1}\left(A^1_XY, A^1_XY\right)+f_2^2\overset{\circ}{g}_{k_2}\left(A^2_XY, A^2_XY\right).  
\end{align*}
Hence, taking an orthonormal vector field $E_1, \dots, E_m$ to be basic, we obtain
\begin{align*}
\lvert A\rvert^2
&=\sum_{i, j=1}^m (\rho^*g)(A_{E_i}E_j, A_{E_i}E_j)\\
&=\sum_{i, j=1}^m f_1^2\overset{\circ}{g}_{k_1}\left(A^1_{E_i}E_j, A^1_{E_i}E_j\right)
+\sum_{i, j=1}^m f_2^2\overset{\circ}{g}_{k_2}\left(A^2_{E_i}E_j, A^2_{E_i}E_j\right)\\
&=\lvert A^1\rvert^2 f_1^2+\lvert A^2\rvert^2 f_2^2.  
\end{align*}
This proves \eqref{eq:ONeillJoin}.  
\end{proof}

Let $\cn_{\mathbf{k}}(t)$, $\sn_{\mathbf{k}}(t)$ be the Jacobi elliptic functions (\cite{WW}, \cite{Weis}) 
and $4K(\mathbf{k})$ their fundamental period ($0\le \mathbf{k}<1$).  
$\cn_{\mathbf{k}}, \sn_{\mathbf{k}}: \real \to \real$ 
satisfy $\cn_{\mathbf{k}}^2+\sn_{\mathbf{k}}^2=1$, 
\begin{align*}
(\cn_{\mathbf{k}}')^2
=(1-\cn_{\mathbf{k}}^2)(1-\mathbf{k}^2+\mathbf{k}^2\cn_{\mathbf{k}}^2), &&
\cn_{\mathbf{k}}''
=-2\mathbf{k}^2\cn_{\mathbf{k}}^3-(1-2\mathbf{k}^2)\cn_{\mathbf{k}}, \\
(\sn_{\mathbf{k}}')^2=(1-\sn_{\mathbf{k}}^2)(1-\mathbf{k}^2\sn_{\mathbf{k}}^2), 
&&\sn_{\mathbf{k}}''=2\mathbf{k}^2\sn_{\mathbf{k}}^3-(1+\mathbf{k}^2)\sn_{\mathbf{k}}
\end{align*}
on $\real$.  
$\cn_{\mathbf{k}} /\sqrt{1-\mathbf{k}^2}$, $\sn_{\mathbf{k}}$ are strictly positive on $(0, T)$ and satisfy 
the boundary conditions \eqref{eq:BC} for $f_1$, $f_2$ with $T=K(\mathbf{k})$.  


\begin{lemma}\label{lem:ODEanJoin}
Assume $\check{R}$, $a_i\ge 0$, and $k_i\ge 1$ are constant for $i=1, 2$.  
Suppose $\gamma\in(0, \infty)$ and $\mathbf{k}\in [0, 1)$ satisfy
\begin{equation}\label{eq:QEAl}\begin{split}
(k_1+k_2)(k_1+k_2+3)\gamma^4 \mathbf{k}^2(1-\mathbf{k}^2)=a_1^2-(1-\mathbf{k}^2)a_2^2.  
\end{split}\end{equation}
Then, $f_1(t)=\cn_{\mathbf{k}}(\gamma t)/(\gamma\sqrt{1-\mathbf{k}^2})$ and $f_2(t)=\sn_{\mathbf{k}}(\gamma t)/\gamma$ solve
\begin{equation}\label{eq:ODEanalyticJoin}
\begin{split}
R-\check{R}&=
-2k_1\frac{f_1''}{f_1}+k_1(k_1-1)\frac{1-(f_1')^2}{f_1^2}-a_1^2f_1^2\\
&\quad-2k_2\frac{f_2''}{f_2}+k_2(k_2-1)\frac{1-(f_2')^2}{f_2^2}-a_2^2f_2^2
-2k_1k_2\frac{f_1'f_2'}{f_1f_2}
\end{split}
\end{equation}
on $(0, T)$, are strictly positive on $(0, T)$, and satisfy the boundary conditions \eqref{eq:BC} with $T=K(\mathbf{k})/\gamma$, 
\begin{equation}\label{eq:Rdef}
R-\check{R}=-2(k_1+k_2)(k_1+1)\gamma^2\mathbf{k}^2+(k_1+k_2)(k_1+k_2+1)\gamma^2-a_2^2/\gamma^2.  
\end{equation}
\end{lemma}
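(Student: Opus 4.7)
My plan is to substitute the explicit ansatz $f_1(t)=\cn_{\mathbf{k}}(\gamma t)/(\gamma\sqrt{1-\mathbf{k}^2})$ and $f_2(t)=\sn_{\mathbf{k}}(\gamma t)/\gamma$ into the right-hand side of \eqref{eq:ODEanalyticJoin}, reduce each of the seven summands to a polynomial in $c^2:=\cn_{\mathbf{k}}^2(\gamma t)$ using $\cn_{\mathbf{k}}^2+\sn_{\mathbf{k}}^2=1$ together with the Jacobi identities listed just before the lemma, and check that the result is \emph{affine} in $c^2$. The coefficient of $c^2$ will be the quantity forced to vanish by \eqref{eq:QEAl}, and the remaining constant will coincide with the value in \eqref{eq:Rdef}. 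Positivity on $(0,T)$ and the boundary conditions \eqref{eq:BC} will then follow from well-known facts about $\cn_{\mathbf{k}}$ and $\sn_{\mathbf{k}}$.

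First, the ODEs $\cn_{\mathbf{k}}''=-2\mathbf{k}^2\cn_{\mathbf{k}}^3-(1-2\mathbf{k}^2)\cn_{\mathbf{k}}$ and $\sn_{\mathbf{k}}''=2\mathbf{k}^2\sn_{\mathbf{k}}^3-(1+\mathbf{k}^2)\sn_{\mathbf{k}}$ give $f_1''/f_1=-2\gamma^2\mathbf{k}^2 c^2-\gamma^2(1-2\mathbf{k}^2)$ and $f_2''/f_2=2\gamma^2\mathbf{k}^2(1-c^2)-\gamma^2(1+\mathbf{k}^2)$. The first-order identities for $(\cn'_{\mathbf{k}})^2$ and $(\sn'_{\mathbf{k}})^2$, after eliminating $\sn_{\mathbf{k}}^2$ via $1-c^2$, yield $(1-(f_1')^2)/f_1^2=\gamma^2(1-2\mathbf{k}^2+\mathbf{k}^2 c^2)$ and $(1-(f_2')^2)/f_2^2=\gamma^2(1+\mathbf{k}^2 c^2)$. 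For the coupling term I multiply the two first-order identities and take square roots (with the signs appropriate on $(0,K(\mathbf{k}))$) to get $\cn'_{\mathbf{k}}\sn'_{\mathbf{k}}=-\cn_{\mathbf{k}}\sn_{\mathbf{k}}(1-\mathbf{k}^2\sn_{\mathbf{k}}^2)$, whence $f_1'f_2'/(f_1 f_2)=-\gamma^2(1-\mathbf{k}^2)-\gamma^2\mathbf{k}^2 c^2$.

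Second, I assemble these into the right-hand side of \eqref{eq:ODEanalyticJoin}. The coefficient of $c^2$ equals
\[
\gamma^2\mathbf{k}^2\bigl[\,4k_1+k_1(k_1-1)+4k_2+k_2(k_2-1)+2k_1k_2\,\bigr]-\frac{a_1^2}{\gamma^2(1-\mathbf{k}^2)}+\frac{a_2^2}{\gamma^2},
\]
and the bracket collapses to $(k_1+k_2)^2+3(k_1+k_2)=(k_1+k_2)(k_1+k_2+3)$. Multiplying through by $\gamma^2(1-\mathbf{k}^2)$ shows that this coefficient vanishes precisely when \eqref{eq:QEAl} holds. The remaining constant, using the parallel identity $(k_1+1)(k_1+2k_2)+k_2(k_2-1)=(k_1+k_2)(k_1+k_2+1)$, reduces to $-2(k_1+k_2)(k_1+1)\gamma^2\mathbf{k}^2+(k_1+k_2)(k_1+k_2+1)\gamma^2-a_2^2/\gamma^2$, which is exactly \eqref{eq:Rdef}.

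Finally, the analytic claims follow from standard properties of Jacobi elliptic functions. Both $\cn_{\mathbf{k}}$ and $\sn_{\mathbf{k}}$ are strictly positive on $(0,K(\mathbf{k}))$, giving positivity of $f_1,f_2$ on $(0,T)$ with $T=K(\mathbf{k})/\gamma$. The values $\cn_{\mathbf{k}}(0)=1$, $\sn_{\mathbf{k}}(0)=0$, $\sn'_{\mathbf{k}}(0)=1$, $\cn_{\mathbf{k}}(K(\mathbf{k}))=0$, $\cn'_{\mathbf{k}}(K(\mathbf{k}))=-\sqrt{1-\mathbf{k}^2}$ deliver the zeroth- and first-order entries of \eqref{eq:BC}, and the higher-order vanishing conditions follow from the parities of $\cn_{\mathbf{k}}$ and $\sn_{\mathbf{k}}$ about $0$ and $K(\mathbf{k})$ (apparent either from their Taylor expansions or from the quarter-period shift formulas). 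The principal obstacle is the bookkeeping in step two: one must regroup the seven terms so that the two combinatorial identities emerge cleanly, but no new analytical ingredient is needed beyond the Jacobi identities already recorded in the text.
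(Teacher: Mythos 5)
Your proposal is correct and follows essentially the same route as the paper: both substitute the Jacobi elliptic ansatz, use the identities for $\cn_{\mathbf{k}}$, $\sn_{\mathbf{k}}$ recorded before the lemma, and verify \eqref{eq:ODEanalyticJoin} algebraically, with positivity and the boundary conditions \eqref{eq:BC} taken as standard properties of $\cn_{\mathbf{k}}$, $\sn_{\mathbf{k}}$ exactly as the paper does. The only difference is bookkeeping: you organize the right-hand side as an affine function of $\cn_{\mathbf{k}}^2(\gamma t)$ whose linear coefficient vanishes precisely under \eqref{eq:QEAl}, whereas the paper works with $f_1^2$, $f_2^2$ and the relation $(1-\mathbf{k}^2)f_1^2+f_2^2=1/\gamma^2$ — the same computation in a different parametrization, and your coefficient identities (including $(k_1+k_2)(k_1+k_2+3)$ and the reduction of the constant term to \eqref{eq:Rdef}) check out.
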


\begin{proof}
For arbitrary $\gamma\in(0, \infty)$ and $\mathbf{k}\in[0, 1)$,  
$f_1(t)=\cn_{\mathbf{k}}(\gamma t)/(\gamma\sqrt{1-\mathbf{k}^2})$ and $f_2(t)=\sn_{\mathbf{k}}(\gamma t)/\gamma$ 
are strictly positive on $(0, T)$ and satisfy the boundary conditions \eqref{eq:BC} with $T=K(\mathbf{k})/\gamma$.  
Since 
\begin{align*}
(f_1'(t))^2&=\left(\frac{\cn_{\mathbf{k}}'(\gamma t)}{\sqrt{1-\mathbf{k}^2}}\right)^2
=-\frac{\mathbf{k}^2}{1-\mathbf{k}^2}\cn_{\mathbf{k}}^4(\gamma t)-\frac{1-2\mathbf{k}^2}{1-\mathbf{k}^2}\cn_{\mathbf{k}}^2(\gamma t)+1\\
&=-\gamma^4 \mathbf{k}^2(1-\mathbf{k})^2f_1^4(t)-\gamma^2(1-2\mathbf{k}^2)f_1^2(t)+1, \\
f_1''(t)
&=-2\gamma^4\mathbf{k}^2(1-\mathbf{k})^2f_1^3(t)-\gamma^2(1-2\mathbf{k}^2)f_1(t), 
\end{align*}
\begin{align*}
-\frac{f_1''}{f_1}=2\gamma^4\mathbf{k}^2(1-\mathbf{k})^2f_1^2+\gamma^2(1-2\mathbf{k}^2), &&
\frac{1-(f_1')^2}{f_1^2}=\gamma^4\mathbf{k}^2(1-\mathbf{k})^2f_1^2+\gamma^2(1-2\mathbf{k}^2), 
\end{align*}
we obtain
\begin{align}\label{eq:Aux02}
-2k_1\frac{f_1''}{f_1}+k_1(k_1-1)\frac{1-(f_1')^2}{f_1^2}
=k_1(k_1+3)\gamma^4 \mathbf{k}^2(1-\mathbf{k}^2)f_1^2+k_1(k_1+1)\gamma^2(1-2\mathbf{k}^2).  
\end{align}
Similarly, since 
\begin{align*}
(f_2'(t))^2&=(\sn'_{\mathbf{k}}(\gamma t))^2
=\mathbf{k}^2\sn_{\mathbf{k}}^4(\gamma t)-(1+\mathbf{k}^2)\sn_{\mathbf{k}}^2(\gamma t)+1\\
&=\gamma^4\mathbf{k}^2f_2^4(t)-\gamma^2(1+\mathbf{k}^2)f_2^2(t)+1, \\
f_2''(t)&=2\gamma^4\mathbf{k}^2f_2^3(t)-\gamma^2(1+\mathbf{k}^2)f_2(t), 
\end{align*}
\begin{align*}
-\frac{f_2''}{f_2}=-2\gamma^4\mathbf{k}^2f_2^2+\gamma^2(1+\mathbf{k}^2), &&
\frac{1-(f_2')^2}{f_2^2}=-\gamma^4\mathbf{k}^2f_2^2+\gamma^2(1+\mathbf{k}^2), 
\end{align*}
we obtain
\begin{align}\label{eq:Aux01}
-2k_2\frac{f_2''}{f_2}+k_2(k_2-1)\frac{1-(f_2')^2}{f_2^2}
=-k_2(k_2+3)\gamma^4\mathbf{k}^2f_2^2+k_2(k_2+1)\gamma^2(1+\mathbf{k}^2).  
\end{align}
The relation $\cn_{\mathbf{k}}^2+\sn_{\mathbf{k}}^2=1$ yields
\begin{align}\label{align:conv}
(1-\mathbf{k}^2)f_1^2+f_2^2=1/\gamma^2, &&
(1-\mathbf{k}^2)f_1f_1'+f_2f_2'=0, 
\end{align}
\begin{align*}
-2k_1k_2\frac{f_1'f_2'}{f_1f_2}
&=2k_1k_2\frac{f_2'}{f_1^2f_2}\frac{f_2f_2'}{1-\mathbf{k}^2}
=\frac{2k_1k_2}{1-\mathbf{k}^2}\frac{(f_2')^2}{f_1^2}.  
\end{align*}
Since we can also write
\begin{align*}
(f_2'(t))^2&=(\sn'_{\mathbf{k}}(\gamma t))^2
=(1-\sn_{\mathbf{k}}^2(\gamma t))(1-\mathbf{k}^2\sn_{\mathbf{k}}^2(\gamma t))\\
&=\gamma^2(1-\mathbf{k}^2)f_1^2(t)(1-\gamma^2\mathbf{k}^2f_2^2(t)), 
\end{align*}
there holds
\begin{align}\label{eq:Aux03}
-2k_1k_2\frac{f_1'f_2'}{f_1f_2}
=-2k_1k_2\gamma^4\mathbf{k}^2f_2^2+2k_1k_2\gamma^2.  
\end{align}
Hence \eqref{eq:Aux01}, \eqref{eq:Aux02}, and \eqref{eq:Aux03} imply
\begin{align*}
&-2k_1\frac{f_1''}{f_1}+k_1(k_1-1)\frac{1-(f_1')^2}{f_1^2}
-2k_2\frac{f_2''}{f_2}+k_2(k_2-1)\frac{1-(f_2')^2}{f_2^2}
-2k_1k_2\frac{f_1'f_2'}{f_1f_2}\\
&=k_1(k_1+3)\gamma^4 \mathbf{k}^2(1-\mathbf{k}^2)f_1^2+k_1(k_1+1)\gamma^2(1-2\mathbf{k}^2)\\
&\quad -k_2(k_2+3)\gamma^4\mathbf{k}^2f_2^2 +k_2(k_2+1)\gamma^2(1+\mathbf{k}^2)\\
&\quad -2k_1k_2\gamma^4\mathbf{k}^2f_2^2  +2k_1k_2\gamma^2\\
&=k_1(k_1+3)\gamma^4 \mathbf{k}^2(1-\mathbf{k}^2)f_1^2-k_2(2k_1+k_2+3)\gamma^4\mathbf{k}^2f_2^2\\
&\quad+(k_1+k_2)(k_1+k_2+1)\gamma^2-(2k_1^2-k_2^2+2k_1-k_2)\gamma^2\mathbf{k}^2.  
\end{align*}
Under the assumption \eqref{eq:QEAl}, 
\[
1-\mathbf{k}^2 \ : \ 1
\quad =\quad k_1(k_1+3)\gamma^4 \mathbf{k}^2(1-\mathbf{k}^2)-a_1^2\ :\ -\left(k_2(2k_1+k_2+3)\gamma^4\mathbf{k}^2+a_2^2\right)
\]
holds so that 
\begin{align*}
&-2k_1\frac{f_1''}{f_1}+k_1(k_1-1)\frac{1-(f_1')^2}{f_1^2}-a_1^2f_1^2\\
&-2k_2\frac{f_2''}{f_2}+k_2(k_2-1)\frac{1-(f_2')^2}{f_2^2}-a_2^2f_2^2-2k_1k_2\frac{f_1'f_2'}{f_1f_2}\\
&=\left(k_1(k_1+3)\gamma^4 \mathbf{k}^2(1-\mathbf{k}^2)-a_1^2\right)f_1^2-\left(k_2(2k_1+k_2+3)\gamma^4\mathbf{k}^2+a_2^2\right)f_2^2\\
&\quad+(k_1+k_2)(k_1+k_2+1)\gamma^2-(2k_1^2-k_2^2+2k_1-k_2)\gamma^2\mathbf{k}^2\\
&=-\left(k_2(2k_1+k_2+3)\gamma^4\mathbf{k}^2+a_2^2\right)/\gamma^2\\
&\quad+(k_1+k_2)(k_1+k_2+1)\gamma^2-(2k_1^2-k_2^2+2k_1-k_2)\gamma^2\mathbf{k}^2\\
&=-(2k_1k_2+2k_1^2+2k_2+2k_1)\gamma^2\mathbf{k}^2+(k_1+k_2)(k_1+k_2+1)\gamma^2-a_2^2/\gamma^2\\
&=-2(k_1+k_2)(k_1+1)\gamma^2\mathbf{k}^2+(k_1+k_2)(k_1+k_2+1)\gamma^2-a_2^2/\gamma^2\\
&=R-\check{R}.  
\end{align*}
This proves \eqref{eq:ODEanalyticJoin}.  
\end{proof}

\begin{proof}[Proof of Theorem \ref{Thm:FiberJoin}]

Note that the norm $\lvert A^i\rvert$ of O'Neill tensor for $\pi_i: (UE_i, g_i)\to (B, \check{g})$ is constant since 
$g_i$ and $\check{g}$ have constant scalar curvature (cf.\ Proposition \ref{cscRiemSubm}).  
$\lvert A^i\rvert=0$ if and only if $\nabla^i$ is flat.  

Let $\check{R}=R(\check{g})$, $a_i=\lvert A^i\rvert$.  
Suppose $\gamma\in (0, \infty)$ and $\mathbf{k}\in [0, 1)$ satisfy \eqref{eq:QEAl}, 
and set $R\in \real$ by \eqref{eq:Rdef}.  
%
Define $T=K(\mathbf{k})/\gamma$, $f_1(t)=\cn_{\mathbf{k}}(\gamma t)/(\gamma\sqrt{1-\mathbf{k}^2})$, and $f_2(t)=\sn_{\mathbf{k}}(\gamma t)/\gamma$.  
Since $f_1$ and $f_2$ are positive and satisfy the boundary conditions (Lemma \ref{lem:ODEanJoin}), 
there exists an $\Orth(k_1+1)\times \Orth(k_2+1)$-invariant $C^{\infty}$ Riemannian metric $\hat{g}$ on $S^{k_1}*S^{k_2}$ 
such that $\hat{\rho}^*\hat{g}=f_1^2(t) \overset{\circ}{g}_{k_1}+dt^2 +f_2^2(t) \overset{\circ}{g}_{k_2}$ 
on $S^{k_1}\times (0, T)\times S^{k_2}$ (Lemma \ref{lem:Joinf1f2}).  

Let $\pi: \left(U(E_1\oplus E_2), g\right) \to (B, \check{g})$ be the fiberwise join of $\pi_i : (UE, g_i) \to (B, \check{g})$ ($i=1, 2$).  
%
Since $f_1$ and $f_2$ solve \eqref{eq:ODEanalyticJoin}, it follows from \eqref{eq:ODE} 
that the scalar curvature of $\rho^*g$ is identically equal to $R$ on $(0, T)\times \left(UE_1\underset{B}{\times}UE_2\right)$.  
Therefore, $g$ has constant scalar curvature $R$ on $U(E_1\oplus E_2)$ 
since $\rho: (0, T)\times \left(UE_1\underset{B}{\times}UE_2\right)\to U(E_1\oplus E_2)$ has dense image (Lemma \ref{lem:PropRhoJoin}).  

For the rest of proof, we solve \eqref{eq:QEAl}, \eqref{eq:Rdef} explicitly.  
Firstly, observe the following: 
If $\gamma\in(0, \infty)$, $\mathbf{k}\in (0, 1)$ satisfy \eqref{eq:QEAl} and if $a_1^2-(1-\mathbf{k}^2)a_2^2>0$, then 
\begin{align}
\gamma^2&=\sqrt{\frac{1}{(k_1+k_2)(k_1+k_2+3)}}\sqrt{\frac{a_1^2-(1-\mathbf{k}^2)a_2^2}{\mathbf{k}^2(1-\mathbf{k}^2)}}>0, \label{align:gam}\\
R-\check{R}&=(k_1+k_2)\left(k_1+k_2+1-2(k_1+1)\mathbf{k}^2\right)\gamma^2-a_2^2/\gamma^2\label{align:R1}\\
\begin{split}&=\sqrt{\frac{k_1+k_2}{k_1+k_2+3}}\sqrt{\frac{a_1^2-(1-\mathbf{k}^2)a_2^2}{\mathbf{k}^2(1-\mathbf{k}^2)}}\left(k_1+k_2+1-2(k_1+1)\mathbf{k}^2\right)\\
&\quad - a_2^2\sqrt{(k_1+k_2)(k_1+k_2+3)}\sqrt{\frac{\mathbf{k}^2(1-\mathbf{k}^2)}{a_1^2-(1-\mathbf{k}^2)a_2^2}}.  \end{split}\label{align:R2}
\end{align}
If $a_1=a_2=0$, then we take $\mathbf{k}=0$ and an arbitrary $\gamma>0$ so that $R-\check{R}=(k_1+k_2)(k_1+k_2+1)\gamma^2$.  
The resulting metrics are locally the direct products of $(B, \check{g})$ and the $(k_1+k_2+1)$-dimensional round spheres 
in this case.  
Interchanging $E_1$ and $E_2$ if necessary, 
assume $a_1> 0$.  
\begin{itemize}
\item Case 1: $a_1>a_2$.  
For each $\mathbf{k}\in (0, 1)$, define $\gamma>0$ by \eqref{align:gam}.  
As $\mathbf{k}\to 0$, $\gamma\to \infty$.  
Hence $R\to \infty$ as $\mathbf{k}\to 0$ by \eqref{align:R1}.  
\item Case 2: $a_1=a_2>0$.  
We can take $\mathbf{k}=0$ and an arbitrary $\gamma >0$ so that $R-\check{R}=(k_1+k_2)(k_1+k_2+1)\gamma^2-a_2^2/\gamma^2$.  
We can also define $\gamma>0$ by \eqref{align:gam} for each $\mathbf{k}\in (0, 1)$.  
As $\mathbf{k}\to 0$, $\gamma\to\infty$.  
Hence $R\to \infty$ as $\mathbf{k}\to 0$ by \eqref{align:R1}.  
\item Case 3: $0<a_1<a_2$.  
For each $\mathbf{k}\in (1-a_1^2/a_2^2, 1)$, define $\gamma>0$ by \eqref{align:gam}.  
As $\mathbf{k}\to 1-a_1^2/a_2^2$, $\gamma\to 0$.  
Hence $R\to -\infty$ as $\mathbf{k}\to 1-a_1^2/a_2^2$ by \eqref{align:R1}.  
\end{itemize}
In all these cases, we can take $\mathbf{k}$ arbitrarily close to $1$.  Lastly, we see the limiting behavior of $R$ as $\mathbf{k}\to 1$.  
We claim: 
\begin{align}
&\text{If $k_2> k_1+1$, then $R\to\infty$ as $\mathbf{k}\to 1$.  }\label{align:limRpos}\\
&\text{If $k_2= k_1+1$, then $R\to 0$ as $\mathbf{k}\to 1$.  }\label{align:limRzero}\\
&\text{If $k_2< k_1+1$, then $R\to -\infty$ as $\mathbf{k}\to 1$.   }\label{align:limRneg}
\end{align}
Observe $\gamma \to \infty$ as $\mathbf{k}\to 1$.  
If $k_2>k_1+1$, then 
\[
\lim_{\mathbf{k}\to 1}\left(k_1+k_2+1-2(k_1+1)\mathbf{k}^2\right)=-k_1+k_2-1>0
\]
holds so that \eqref{align:R1} implies \eqref{align:limRpos}.  
We obtain \eqref{align:limRneg} similarly.  
Suppose $k_2=k_1+1$.  
Since l'H\^opital's rule yields
\begin{align*}
\lim_{\mathbf{k}\to 1}\left(\frac{k_1+k_2+1-2(k_1+1)\mathbf{k}^2}{\sqrt{1-\mathbf{k}^2}}\right)^2
=\lim_{\mathbf{k}\to 1}4(k_1+1)(k_1+k_2+1-2(k_1+1)\mathbf{k}^2)
=0, 
\end{align*}
\eqref{align:R2} implies \eqref{align:limRzero}.  
%
\end{proof}

%
%

\begin{remark}\label{rem:parameter}
There are one or two one-parameter families of connection metrics with constant scalar curvature on $U(E_1\oplus E_2)\cong U(E_2\oplus E_1)$.  
(1) If both $\nabla^1$ and $\nabla^2$ are flat, then there is a one-parameter family of locally product metrics.  
(2) Suppose either $\nabla^1$ or $\nabla^2$ is flat but the other one is not flat.  
Interchange $E_1$ and $E_2$ if necessary and assume $\nabla^2$ is flat.  
In the notation of the previous proof, $a_1>0$ and $a_2=0$, 
and the consideration for Case 1 shows that there is a one-parameter family.  
(3) Suppose neither $\nabla^1$ nor $\nabla^2$ is flat.  
If $a_1=a_2$, then there are two one-parameter families (Case 2).  
If $a_1\neq a_2$, then, interchanging the roles of $E_1$ and $E_2$, 
we see that there are two one-parameter families (Case 1 and Case 3).  
\end{remark}

\begin{remark}\label{rem:inductive}
Let $(S^{k_1+k_2+1}, \hat{g})\to\left(U(E_1\oplus E_2), g\right)\xrightarrow{\pi}(B, \check{g})$ be the fiberwise join in Theorem \ref{Thm:FiberJoin}.  
The preceding proof implies that the following are equivalent: 
(1) $(S^{k_1+k_2+1}, \hat{g})$ is a round sphere, 
(2) the scalar curvature of $\hat{g}$ is constant, and 
(3) the norm $\lvert A\rvert$ of O'Neill tensor for $\pi$ is constant.  
(2) and (3) are equivalent since the base space $(B, \check{g})$ has constant scalar curvature (see Proposition \ref{cscRiemSubm}).  
(2) implies (1) as follows.  
$\hat{g}$ has constant scalar curvature if and only if $a_1^2 f_1^2 + a_2^2 f_2^2$ is constant (see \eqref{eq:ScalJoin}, \eqref{eq:ODE}).  
Since $(1-\mathbf{k}^2)f_1^2+f_2^2=1/\gamma^2$ holds (see \eqref{align:conv}), this is possible only if 
$a_1^2 : a_2^2 = 1-\mathbf{k}^2 : 1$, 
so \eqref{eq:QEAl} yields $\mathbf{k}=0$.  
Hence $\cn_{\mathbf{k}}=\cos$ and $\sn_{\mathbf{k}}=\sin$ are trigonometric functions, 
and $\hat{g}$ has constant sectional curvature.  

In particular, if $\lvert A^1\rvert \neq \lvert A^2\rvert$, 
we cannot rescale the fiber of $\pi$ to obtain metrics of constant scalar curvature 
as in the case of Theorem \ref{Thm:irrep} (see Proposition \ref{cscRiemSubm}).  
\end{remark}

%
%

\subsection{Proof of Theorem \ref{Thm:sum}}\label{sect:ProofOfThm:sum}

The proof is based on Theorems \ref{Thm:irrep} 
and \ref{Thm:FiberJoin}.  

\begin{proof}[Proof of Theorem \ref{Thm:sum}]
%
%
Let $(B, \check{g})=G/H$ be a Riemannian homogeneous space 
and $\phi: H\to \SO(k+1)$ an orthogonal representation of $H$ on $\real^{k+1}$.  
We write $\phi=\phi_1\oplus\phi_2$ 
where $\phi_i$ is a trivial or irreducible representation on $\real^{k_i+1}$ for $i=1, 2$.  

If $k_{1}+1=0$ or $k_{2}+1=0$, then $\phi$ itself is trivial or irreducible, so 
we can define connection metrics of constant scalar curvature on $U(G\times_{\phi}\real^{k+1})$ using Theorem \ref{Thm:irrep}.  

Assume $k_{1}+1\ge 1$, $k_{2}+1\ge 1$.  
Firstly, we build connection metrics on $U(G\times_{\phi_i}\real^{k_i+1})$ making use of Theorem \ref{Thm:irrep}.  
Then using Theorem \ref{Thm:FiberJoin}, we define connection metrics of constant scalar curvature on 
\[
U(G\times_{\phi}\real^{k+1})=U(G\times_{\phi_1}\real^{k_1+1}\oplus G\times_{\phi_2}\real^{k_2+1}).  
\]
This proves Theorem \ref{Thm:sum}.  
\end{proof}

Every orthogonal representation $\phi: H\to SO(k+1)$ of a compact group $H$ is completely reducible.  
Namely, $\phi$ is equivalent to the direct sum 
$\mathbf{1}_{k_0+1}\oplus \phi_1 \oplus \dots \oplus \phi_l$ of 
the trivial representation $\mathbf{1}_{k_0+1}$ on the Euclidean space of dimension $k_{0}+1\ge 0$ 
and $l\ge 0$ irreducible representations $\phi_1, \dots, \phi_l$ of dimension $k_1+1, \dots, k_l+1\ge 2$.  

We cannot apply Theorem \ref{Thm:FiberJoin} inductively in general, 
for the typical fibers do not have constant sectional curvature if $\lvert A^1\rvert \neq \lvert A^2\rvert$ (cf.\ Remark \ref{rem:inductive}).  
However, provided $l\ge 1$ and $\phi_1=\dots=\phi_l$ in the notation above, 
we can define connection metrics of constant scalar curvature on 
\[
U(G\times_{\phi}\real^{k+1})=U\left((B\times\real^{k_0+1})\oplus G\times_{\phi_1}\real^{k_1+1}\oplus \dots \oplus G\times_{\phi_l}\real^{k_l+1}\right).  
\]
For this, we have only to apply Theorem \ref{Thm:FiberJoin} for $(l-1)$ or $l$ times according to $k_0+1=0$ or $k_0+1\ge 1$ 
since $\lvert A^0\rvert=0$, $\lvert A^1\rvert=\dots=\lvert A^l\rvert$.  
It is interesting to ask if Theorem \ref{Thm:sum} holds without any assumption on the orthogonal representation $\phi$.  




\section{The number of constant scalar curvature metrics}\label{sect:number}

Let $(M, g)$ be an $n$-dimensional closed Riemannian manifold of constant scalar curvature.  
We consider the Yamabe equation 
\begin{align}\label{YamabePDE}
-a_n \Delta_g u +R(g) u = R(g) u^{p_n-1}, && u \in C^{\infty}_+(M)
\end{align}
with $a_n=4(n-1)/(n-2)$, $p_n=2n/(n-2)$, and $\Delta=\tr \nabla^2$.  
A function $u$ solves \eqref{YamabePDE} if and only if the conformally deformed metric $u^{p_n-2}g$ has constant scalar curvature $R(g)$.  
If $R(g)\le 0$, then the maximum principle implies that only constant functions solve \eqref{YamabePDE}.  

Suppose $H\overset{\text{isom}}{\lact}(F, \hat{g}) \xrightarrow{\iota} (M, g) \xrightarrow{\pi} (B, \check{g})$ 
is a Riemannian submersion with totally geodesic fibers.  
Let $C^{\infty}(F)^H$ be the subset of $C^{\infty}(F)$ consisting of functions invariant under the action $H\lact F$.  
Through parallel transports along horizontal paths, 
we regard each $\hat{u}\in C^{\infty}(F)^H$ 
as the function $\iota_*\hat{u}$ on $M$.  
More precisely, for $y\in M$, take a horizontal path joining $y$ and a point $x\in F$, 
and define $(\iota_*\hat{u})(y)=\hat{u}(x)$.  
$\iota_*\hat{u}$ is well-defined 
since $\hat{u}$ is invariant under the action of $\Hol(HM)$.  
The space of all such functions on $M$ is denoted by $\iota_*C^{\infty}(F)^H$.  
Similarly, $\pi^*C^{\infty}(B)$ denotes the subset of $C^{\infty}(M)$ 
consisting of functions written as $\pi^*\check{u}$ for some $\check{u}\in C^{\infty}(B)$.  

\begin{proposition}\label{prop:HorVert}
Let $u\in C^{\infty}(M)$.  
\begin{enumerate}
\item The following three conditions are equivalent.  
$u\in \pi^*C^{\infty}(B)$.  
$u$ is constant along each fiber of $\pi$.  
The gradient vector field of $u$ is normal to the fibers of $\pi$.  
\item If $u\in \iota_*C^{\infty}(F)^H$, then the gradient vector field of $u$ is tangent to the fibers of $\pi$.  
Conversely, if the gradient vector field of $u$ is tangent to the fibers of $\pi$, 
then $u\in\iota_*C^{\infty}(F)^{\Hol (HM)}$.  
\end{enumerate}
\end{proposition}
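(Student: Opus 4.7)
The plan is to split the statement into (1) and (2) and, in each case, reduce to the identity $g(\grad u,W)=du(W)=W(u)$ combined with the orthogonal decomposition $TM=HM\oplus VM$. The fibers of $\pi$ are connected (each is isometric to the connected typical fiber $F$), so ``locally constant along a fiber'' coincides with ``constant on the fiber''.

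For (1), the three conditions are essentially restatements of one another. If $u=\pi^{*}\check{u}$ then $u$ is trivially constant on each fiber. Conversely, if $u$ is constant on each fiber, one sets $\check{u}(b):=u(y)$ for any $y\in\pi^{-1}(b)$; smoothness of $\check{u}$ follows because $\pi$ is a surjective submersion admitting smooth local sections. Finally, $\grad u$ is $g$-normal to the fibers iff $V(u)=g(\grad u,V)=0$ for every vertical $V$, which by connectedness of the fibers is equivalent to $u$ being constant on each fiber.

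For the first half of (2), I would suppose $u=\iota_{*}\hat{u}$ with $\hat{u}\in C^{\infty}(F)^{H}$. By the very construction of $\iota_{*}\hat{u}$ recalled in the excerpt, the value of $u$ at any $y\in M$ is defined to be $\hat{u}(x)$, where $x\in F$ is the initial point of any horizontal path ending at $y$. Thus $u$ is manifestly constant along every horizontal path, so $H(u)=0$ for each horizontal vector field $H$, and hence $\grad u\in VM$. One uses implicitly that every $H$-invariant function on $F$ is also $\Hol(HM)$-invariant, since the holonomy group acts on $F$ through a subgroup of the image of $H$ in $\Isom(F,\hat{g})$; this is precisely what makes $\iota_{*}\hat{u}$ well-defined.

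For the converse in (2), assume $\grad u$ is tangent to the fibers of $\pi$. Then $H(u)=0$ for every horizontal vector field $H$, so $u$ is constant along every horizontal curve in $M$. Setting $\hat{u}:=\iota^{*}u\in C^{\infty}(F)$, for each $y\in M$ we pick a horizontal path from some $x\in F$ to $y$ and obtain $u(y)=u(x)=\hat{u}(x)=(\iota_{*}\hat{u})(y)$, so $u=\iota_{*}\hat{u}$. To check $\hat{u}\in C^{\infty}(F)^{\Hol(HM)}$: every $h\in\Hol(HM)$ arises as parallel transport around a piecewise-smooth loop based at $o\in B$, and the horizontal lift of this loop starting at $x\in F$ terminates at $h\cdot x\in F$; constancy of $u$ along horizontal paths then gives $\hat{u}(h\cdot x)=u(x)=\hat{u}(x)$. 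The main --- and essentially only --- delicate point is this well-definedness and holonomy-invariance bookkeeping for $\iota_{*}$, but since the needed facts about parallel transport in connection metrics have already been laid out in the preceding sections, no genuinely new obstacle arises beyond a careful translation.
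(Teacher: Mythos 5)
Your proposal is correct and takes essentially the same route as the paper, which only writes out part 2: constancy of $u=\iota_*\hat{u}$ along horizontal paths via the $\Hol(HM)$-invariance of $\hat{u}$ (granted by $H$-invariance, since the holonomy lies in the image of $H$), hence vanishing horizontal derivatives and a vertical gradient, with the converse obtained by restricting $u$ to $F$ and evaluating along horizontal lifts of loops at $o$. The only difference is cosmetic: where you say $u$ is ``manifestly'' constant along horizontal paths, the paper spells out the concatenation argument with $\gamma_2^{-1}\gamma\gamma_1$ projecting to a loop in $B$ --- a detail, not a gap --- and you additionally record the elementary part 1, which the paper omits.
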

\begin{proof}
We only prove 2.  
%
Suppose $u=\iota_*\hat{u}$ for $\hat{u}\in C^{\infty}(F)^H$.  
We claim that $u$ is constant along every horizontal path.  
To show this, let $\gamma: [0, 1]\to M$ be an arbitrary horizontal path with $\gamma(0)=y_1$ and $\gamma(1)=y_2$.  
For $i=1, 2$, take horizontal paths $\gamma_i: [0, 1]\to M$ with $\gamma_i(0)=x_i$, $\gamma_i(1)=y_i$ for some $x_i \in F$.  
By definition of $\iota_*\hat{u}$, we have $u(y_i)=\hat{u}(x_i)$.  
Since $\gamma_2^{-1}\gamma\gamma_1$ is a horizontal path from $x_1$ to $x_2$ which projects onto a loop in $B$, there holds $\hat{u}(x_1)=\hat{u}(x_2)$, 
for $\hat{u}$ is invariant under $\Hol(HM)$.  
Hence $u(y_1)=u(y_2)$.  
This proves the claim.  
Therefore, as in the proof of Lemma \ref{HhatR}, 
we see that $\grad u$ is tangent to the fibers of $\pi$.  

Conversely, if $\grad u$ is tangent to the fibers of $\pi$, 
then $u$ is constant along the integral curve of every horizontal vector field on $M$.  
Hence $u$ is constant along every horizontal path, and $u\in \iota_*C^{\infty}(F)^{\Hol(HM)}$.  
\end{proof}

\begin{proposition}[B\'erard-Bergery--Bourguignon \cite{BB}]
\begin{align}
\pi^*\circ\Delta_{\check{g}}=\Delta_g \circ\pi^*, &&
\iota_*\circ\Delta_{\hat{g}}=\Delta_g \circ\iota_*.  
\end{align}
\end{proposition}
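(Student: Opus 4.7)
The plan is to prove both identities by expanding $\Delta_g u=\tr(\nabla^2 u)$ in a local orthonormal frame $\{E_1,\dots,E_m,V_1,\dots,V_k\}$ adapted to $TM=HM\oplus VM$: the $E_i$ are horizontal lifts of an orthonormal frame $\check E_i$ on $(B,\check g)$ and the $V_j$ span $VM$. Writing $\Delta_g u=\sum_i(E_i(E_iu)-(\nabla_{E_i}E_i)u)+\sum_j(V_j(V_ju)-(\nabla_{V_j}V_j)u)$, the argument reduces to two structural vanishings. First, antisymmetry of O'Neill's tensor on horizontal vectors gives $(\nabla_{E_i}E_i)^{\mathcal V}=A_{E_i}E_i=\tfrac12\mathcal V[E_i,E_i]=0$. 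Second, since the fibers are totally geodesic, O'Neill's $T$ tensor vanishes, yielding $(\nabla_{V_j}V_j)^{\mathcal H}=0$ and identifying $\nabla_{V_j}V_j$ with the intrinsic fiber connection $\hat\nabla_{V_j}V_j$.

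For the first identity, take $u=\pi^*\check u$. Being fiber-constant, $V_ju\equiv 0$, so the vertical sum contributes only $-\sum_j(\nabla_{V_j}V_j)u$; its horizontal part vanishes by the totally geodesic hypothesis and its vertical part vanishes trivially. For the horizontal terms, $E_iu=\pi^*(\check E_i\check u)$ and the horizontal component of $\nabla_{E_i}E_i$ is the horizontal lift of $\check\nabla_{\check E_i}\check E_i$ (the vertical component being killed by the first vanishing), so the horizontal sum telescopes to $\pi^*\Delta_{\check g}\check u$.

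For the second identity, take $u=\iota_*\hat u$. By Proposition \ref{prop:HorVert}(2), $\grad u$ is vertical, so $E_iu\equiv 0$, and the horizontal sum reduces to $-\sum_i(\nabla_{E_i}E_i)u$, which vanishes by combining $(\nabla_{E_i}E_i)^{\mathcal V}=0$ with the fact that horizontal vectors annihilate $u$. On each fiber $\pi^{-1}(y)$ the $V_j$ restrict to an orthonormal tangent frame, the second vanishing identifies $\nabla_{V_j}V_j$ with $\hat\nabla_{V_j}V_j$, and $u|_{\pi^{-1}(y)}$ corresponds to $\hat u$ under the parallel-transport isometry $F\cong\pi^{-1}(y)$; hence the vertical sum computes precisely to $\iota_*\Delta_{\hat g}\hat u$. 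The main point requiring care is this last identification — recognizing that the extrinsic second derivative of $u$ in vertical directions reproduces the intrinsic fiber Laplacian of $\hat u$ — and it is exactly here that the totally geodesic hypothesis together with the isometric identification of fibers by parallel transport do their work.
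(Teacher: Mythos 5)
Your proof is correct. Note that the paper itself gives no argument for this proposition: it is quoted directly from B\'erard-Bergery--Bourguignon \cite{BB}, so there is no ``paper proof'' to compare against beyond that citation. Your adapted-frame computation is the standard way to verify it, and you invoke exactly the right structural facts at the right places: for $u=\pi^*\check u$, the vanishing of $A_{E_i}E_i$ (or simply that vertical vectors annihilate $u$) together with the fact that $\mathcal{H}\nabla_{E_i}E_i$ is the horizontal lift of $\check\nabla_{\check E_i}\check E_i$ handles the horizontal sum, while total geodesy of the fibers ($T\equiv 0$) kills the horizontal part of $\nabla_{V_j}V_j$; for $u=\iota_*\hat u$, Proposition \ref{prop:HorVert}(2) gives $E_iu\equiv 0$, and the Gauss formula for a totally geodesic submanifold identifies $\nabla_{V_j}V_j$ with the intrinsic fiber connection, so the vertical sum is the fiber Laplacian of $u|_{\pi^{-1}(y)}$, which corresponds to $\Delta_{\hat g}\hat u$ under the parallel-transport isometry precisely because $\hat u$ is holonomy-invariant (which also makes $\iota_*\Delta_{\hat g}\hat u$ well defined, since $\Delta_{\hat g}\hat u$ inherits the invariance). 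The one step you flag as delicate is indeed the only place where both hypotheses (totally geodesic fibers and isometric identification of fibers) are needed, and you handle it correctly.
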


\begin{corollary}\label{cor:HorVert}
Let $H\overset{\text{isom}}{\lact}(F^k, \hat{g}) \xrightarrow{\iota} (M^n, g) \xrightarrow{\pi} (B^m, \check{g})$ be a Riemannian submersion with totally geodesic fibers, 
and assume $g$ has constant scalar curvature.  
\begin{itemize}
\item Suppose $\check{u}>0$ is a smooth function on $B$.  
Then, $u=\pi^*\check{u}$ solves the Yamabe equation \eqref{YamabePDE} on $M$ if and only if $\check{u}$ solves 
\begin{align}\label{YamabeHor}
-a_n \Delta_{\check{g}} \check{u} + R(g) \check{u} = R(g) \check{u}^{p_n-1}\quad\text{on} \ B.  
\end{align}
\item Suppose $\hat{u}>0$ is an $H$-invariant smooth function on $F$.  
Then, $u=\iota_*\hat{u}$ solves the Yamabe equation \eqref{YamabePDE} on $M$ if and only if $\hat{u}$ solves 
\begin{align}\label{YamabeVert}
-a_n \Delta_{\hat{g}} \hat{u} + R(g) \hat{u} = R(g) \hat{u}^{p_n-1}\quad\text{on} \ F.  
\end{align}
\end{itemize}
\end{corollary}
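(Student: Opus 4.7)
The plan is to reduce each assertion to the B\'erard-Bergery--Bourguignon intertwining relations stated just above, combined with an easy injectivity observation. The argument is essentially one line per half, so the proposal is more about pinpointing what still needs checking.

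For the horizontal statement, substitute $u=\pi^*\check{u}$ into the Yamabe equation \eqref{YamabePDE}. Since $\Delta_g\circ\pi^*=\pi^*\circ\Delta_{\check{g}}$, and since pointwise $(\pi^*\check{u})^{p_n-1}=\pi^*(\check{u}^{p_n-1})$, every term is a $\pi$-pullback. The Yamabe equation on $M$ becomes
\[
\pi^*\bigl(-a_n\Delta_{\check{g}}\check{u}+R(g)\check{u}-R(g)\check{u}^{p_n-1}\bigr)=0.
\]
Because $\pi$ is a surjective submersion, $\pi^*$ is injective on $C^{\infty}(B)$, so this is equivalent to \eqref{YamabeHor}.

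For the vertical statement, proceed in exactly the same way with $\iota_*$ in place of $\pi^*$: the intertwining $\iota_*\circ\Delta_{\hat{g}}=\Delta_g\circ\iota_*$ from B\'erard-Bergery--Bourguignon reduces the linear part, and $(\iota_*\hat{u})^{p_n-1}=\iota_*(\hat{u}^{p_n-1})$ handles the nonlinear term since $\iota_*$ is defined pointwise via parallel transport. The resulting equation on $M$ is $\iota_*$ applied to $-a_n\Delta_{\hat{g}}\hat{u}+R(g)\hat{u}-R(g)\hat{u}^{p_n-1}$. Now $\iota_*:C^{\infty}(F)^H\to C^{\infty}(M)$ is injective because restricting $\iota_*\hat{u}$ to the fiber $F\subset M$ recovers $\hat{u}$, so the equation on $M$ is equivalent to \eqref{YamabeVert} on $F$.

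There is no real obstacle here; the only item worth verifying is that $\iota_*$ commutes with taking pointwise powers and that its image lies in $C^{\infty}(M)$, both of which are immediate from the definition and the fact that $\Hol(HM)\subset H$, so that $H$-invariance of $\hat{u}$ suffices to make $\iota_*\hat{u}$ well defined and smooth on $M$.
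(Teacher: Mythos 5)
Your proposal is correct and follows exactly the route the paper intends: the corollary is stated as an immediate consequence of the B\'erard-Bergery--Bourguignon intertwining relations, with the nonlinear term handled pointwise and the equivalence closed by injectivity of $\pi^*$ (surjectivity of $\pi$) and of $\iota_*$ (restriction to the fiber), just as you wrote. Your added remarks on $H$-invariance of $\hat{u}^{p_n-1}$ and well-definedness of $\iota_*$ via $\Hol(HM)\subset H$ match the paper's preceding discussion of $\iota_*C^{\infty}(F)^H$.
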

\noindent If $m\ge 1$ and $k\ge 1$, 
then \eqref{YamabeHor} and \eqref{YamabeVert} are subcritical in a sense that 
the respective Sobolev embeddings $W^{1, 2}\hookrightarrow L^{p_n}$ are compact.   


Let $(N, h)$ be a $d$-dimensional closed Riemannian manifold.  
Take constants $N>d$, $R>0$ and consider the subcritical PDE
\begin{align}\label{YamabeTypePDE}
-a_N \Delta_h v +R v = R v^{p_N-1}, && v\in C^{\infty}_+(N).  
\end{align}
The following theorem is due to O. Kobayashi (\cite{KobO1}, \cite{KobO2}), Schoen \cite{Sch2} for $d=1$ 
and Bidaut-Veron--Veron \cite[Theorem 6.1]{VV} for $d\ge 2$.  
\begin{theorem}\label{thm:YamabeUni}
Assume $d=1$ and $\lambda_1(-\Delta_h) \ge R/(N-1)$, or $d\ge 2$ and $\Ric(h) \ge \frac{d-1}{d}\frac{R}{N-1}h$.  
Then $v\equiv 1$ is the unique solution to \eqref{YamabeTypePDE}.  
\end{theorem}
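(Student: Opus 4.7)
The theorem asserts uniqueness of positive solutions to the subcritical equation \eqref{YamabeTypePDE} under either a spectral or a Ricci hypothesis.  The plan is to show that any positive solution $v$ must be constant; since integrating \eqref{YamabeTypePDE} over $N$ yields $\int_N v^{p_N-1} = \int_N v$ and $R>0$, this forces $v \equiv 1$.  We treat the two cases separately.

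\emph{Case $d \geq 2$.}  The strategy is to apply the Bochner--Lichnerowicz integral identity
\[
\int_N (\Delta_h w)^2 = \int_N |\nabla^2 w|^2 + \int_N \Ric(h)(\nabla w, \nabla w)
\]
to $w = v^\alpha$ for a carefully chosen exponent $\alpha > 0$.  Combined with the refined Kato inequality $|\nabla^2 w|^2 \geq (\Delta_h w)^2/d$ and the Ricci hypothesis, this collapses to
\[
\int_N (\Delta_h w)^2 \geq \frac{R}{N-1}\int_N |\nabla w|^2.
\]
Separately, substituting $\Delta_h v = (R/a_N)(v - v^{p_N-1})$ from \eqref{YamabeTypePDE} into the identity $\Delta_h w = \alpha v^{\alpha-1}\Delta_h v + \alpha(\alpha-1)v^{\alpha-2}|\nabla v|^2$, and testing \eqref{YamabeTypePDE} against $v^{2\alpha-1}$, will yield integral identities expressing $\int_N (\Delta_h w)^2$ and $\int_N |\nabla w|^2$ in terms of $\int_N v^{2\alpha}$, $\int_N v^{2\alpha + p_N-2}$, and $\int_N v^{2\alpha-2}|\nabla v|^4$.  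The aim is then to choose $\alpha$ so that the resulting overall inequality forces pointwise equality everywhere, i.e.\ $|\nabla^2 w|^2 = (\Delta_h w)^2/d$ and $\nabla w = 0$, and hence $v$ is constant.  This is in the spirit of the integral-identity method of Bidaut-V\'eron--V\'eron.

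\emph{Case $d = 1$.}  Now $N$ is a circle and \eqref{YamabeTypePDE} reduces, in an arc length parameter, to the autonomous ODE $a_N v'' = R(v - v^{p_N-1})$ with potential $V(v) = (R/a_N)\left(v^{p_N}/p_N - v^2/2\right)$.  Since $V$ has a nondegenerate minimum at $v = 1$ with $V''(1) = R/(N-1)$, the linearized period of oscillation at $v=1$ equals $T_0 = 2\pi\sqrt{(N-1)/R}$.  Standard phase-plane analysis will show the time map that sends the energy $E = \frac{a_N}{2}(v')^2 + V(v)$ to the period of the corresponding closed orbit is strictly monotone in $E$, so every nonconstant periodic orbit has period $> T_0$.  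The spectral hypothesis $\lambda_1(-\Delta_h) \geq R/(N-1)$ is equivalent, for a circle of length $L$, to $L \leq T_0$, which rules out any nonconstant solution closing up on $[0, L]$.

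\emph{Main obstacle.}  The delicate point in the $d \geq 2$ case is the algebraic choice of $\alpha$, which must be tuned so that the Kato defect $\frac{d-1}{d}\int_N (\Delta_h w)^2$ exactly matches the Ricci contribution at the sharp threshold $\frac{d-1}{d}\frac{R}{N-1}\int_N |\nabla w|^2$, with the PDE-derived nonlinear terms combining into nonnegatives; this is where the critical exponent $N$ of \eqref{YamabeTypePDE} and the constant in the Ricci hypothesis conspire.  In the $d = 1$ case the obstacle is the monotonicity of the time map, which, while classical for a potential of this shape, is the step at which the sharpness of the spectral hypothesis is actually used.
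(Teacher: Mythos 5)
The paper does not prove this statement at all: it is quoted as a known result, with the $d=1$ case attributed to O.~Kobayashi and Schoen and the $d\ge 2$ case to Bidaut-Veron--Veron \cite[Theorem 6.1]{VV}, so the only ``proof'' in the paper is the citation. Your plan is a reasonable reconstruction of those cited arguments (an integral Bochner-type identity for a power of the solution in the spirit of Bidaut-Veron--Veron, and a phase-plane/time-map analysis on the circle in the spirit of Kobayashi and Schoen), so the approach is the right one.

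However, as written the proposal has genuine gaps: in both cases the step that carries the entire content of the theorem is announced but not performed. For $d\ge 2$ you never exhibit the exponent $\alpha$, never write down the integral identities obtained from testing \eqref{YamabeTypePDE} against $v^{2\alpha-1}$ and from the Bochner formula for $w=v^{\alpha}$, and never verify that the leftover nonlinear terms (in particular the $\int v^{2\alpha-2}\lvert\nabla v\rvert^4$ term, which does not have an a priori favorable sign) combine nonnegatively at the sharp constant $\frac{d-1}{d}\frac{R}{N-1}$; this bookkeeping is exactly Bidaut-Veron--Veron's proof and is not a routine consequence of the refined Kato inequality, so ``the aim is then to choose $\alpha$'' leaves the theorem unproved. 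You also need to treat the equality case $\Ric(h)=\frac{d-1}{d}\frac{R}{N-1}h$, where the argument only yields equality throughout and an Obata-type rigidity step is needed to conclude $v\equiv 1$. For $d=1$ the reduction to $L\le T_0=2\pi\sqrt{(N-1)/R}$ is correct, but the strict monotonicity of the period map $E\mapsto T(E)$ for the potential $V(v)=(R/a_N)(v^{p_N}/p_N-v^2/2)$ (and the fact that $T(E)\to T_0$ only in the limit of vanishing amplitude, so that the borderline case $\lambda_1=R/(N-1)$, i.e.\ $L=T_0$, is still covered) is precisely where the sharpness of the hypothesis enters; it requires an actual computation (e.g.\ a $dT/dE$ formula or a first-integral argument as in Kobayashi and Schoen), not an appeal to ``standard phase-plane analysis.'' Until these two steps are supplied, the proposal is a correct strategy outline rather than a proof.
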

\noindent Note that $\Ric(h) \ge \frac{d-1}{d}\frac{R}{N-1}h$ implies $\lambda_1(-\Delta_h) \ge R/(N-1)$.  
O. Kobayashi (\cite{KobO1}, \cite{KobO2}), Schoen \cite{Sch2} for $d=1$ and 
Petean (\cite{Petea}, see also \cite{JLX}, \cite{HP}) for $d\ge 2$ proved the following.  
\begin{theorem}\label{thm:YamabeMult}
Assume $(N, h)=S^d(r)$ is the round sphere.  
If $\lambda_l(-\Delta_h) =l(l+d-1)/r^2< R/(N-1)$, then \eqref{YamabeTypePDE} has at least $l+1$ solutions 
invariant under the cohomogeneity-one action of $\SO(d)$ on $S^d$.  
\end{theorem}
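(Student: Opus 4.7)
The plan is to apply equivariant variational methods to the subcritical Yamabe-type equation restricted to $\SO(d)$-invariant functions. Work in $H := H^1_{\SO(d)}(S^d(r))$, the closed subspace of $H^1$ consisting of functions invariant under the cohomogeneity-one action (equivalently, functions of the polar angle alone). By the principle of symmetric criticality, critical points of
\[
E(v) = \int_{S^d(r)} \Bigl( \tfrac{a_N}{2} |\nabla v|^2 + \tfrac{R}{2} v^2 - \tfrac{R}{p_N} |v|^{p_N} \Bigr) d\mu_h
\]
on $H$ are weak (and hence classical) solutions of \eqref{YamabeTypePDE}. The hypothesis $N > d$ makes $p_N = 2N/(N-2)$ strictly subcritical on the $d$-dimensional manifold $S^d(r)$, so the embedding $H \hookrightarrow L^{p_N}$ is compact and the Palais--Smale condition holds for $E|_H$. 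Positive solutions will be sought as critical points of $E$ on the positive Nehari manifold $\mathcal{N}^+ = \{v \in H : v > 0,\ \langle E'(v), v\rangle = 0\}$, on which $E$ is coercive and bounded below.

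The constant $v \equiv 1$ lies on $\mathcal{N}^+$ as a critical point, and its linearization in \eqref{YamabeTypePDE} is
\[
L_1 := -a_N \Delta_h - (p_N - 2) R = -a_N \Delta_h - \tfrac{4R}{N-2}.
\]
The $\SO(d)$-invariant eigenfunctions of $-\Delta_h$ on $S^d(r)$ are the zonal spherical harmonics $\phi_0 \equiv 1, \phi_1, \phi_2, \ldots$, with eigenvalues $\lambda_k = k(k+d-1)/r^2$, so the spectrum of $L_1|_H$ is $\{a_N \lambda_k - 4R/(N-2)\}_{k \ge 0}$. The assumption $\lambda_l < R/(N-1)$ is equivalent to $a_N \lambda_l < 4R/(N-2)$, so $L_1|_H$ has at least $l+1$ strictly negative eigenvalues. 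Hence the Morse index of $v \equiv 1$ within $H$ is at least $l+1$, supplying $l+1$ independent descent directions from the subspace $V_l := \mathrm{span}(\phi_0, \phi_1, \ldots, \phi_l)$.

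Multiplicity is then produced by a Ljusternik--Schnirelman min-max on $E|_{\mathcal{N}^+}$. For each $j = 0, 1, \ldots, l$, radially project a small sphere of the $(j+1)$-dimensional space $V_j$ onto $\mathcal{N}^+$ to obtain a test cycle $\Sigma_j$, and set
\[
c_j = \inf_{\Sigma \in \Gamma_j} \max_{v \in \Sigma} E(v),
\]
where $\Gamma_j$ is the class of subsets of $\mathcal{N}^+$ of Krasnoselski genus (or LS category) at least $j+1$. The negativity of $L_1|_{V_l}$ guarantees $c_j < E(1)$ for every $j = 0,\ldots,l$. The deformation lemma, available thanks to (PS), promotes each $c_j$ to a critical value, and standard LS theory ensures that even if some $c_j$ coincide, at least $l+1$ distinct critical points are produced (coincidences force a continuum of solutions at that level). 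Together with $v \equiv 1$ itself, this yields the required number of invariant solutions.

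The main obstacle is to manage the interaction between the Nehari constraint and the min-max: one must verify that the radial projection onto $\mathcal{N}^+$ is well-defined and continuous on the test spaces, that (PS) persists on the constrained manifold, and that the critical points produced are positive and mutually geometrically distinct. For $d=1$ these issues are elementary (the problem reduces to an ODE, as handled by O.\ Kobayashi and Schoen); for $d \ge 2$ they require the spectral and bifurcation-theoretic input supplied in \cite{Petea}, \cite{JLX}, and \cite{HP}.
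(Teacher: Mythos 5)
The paper does not prove this statement at all: it is quoted, with attribution, from O.~Kobayashi \cite{KobO1}, \cite{KobO2} and Schoen \cite{Sch2} for $d=1$ and Petean \cite{Petea} for $d\ge 2$, whose arguments restrict to $\SO(d)$-invariant functions, reduce \eqref{YamabeTypePDE} to an ODE in the polar angle, and count solutions via a first integral ($d=1$) resp.\ a Sturm--Liouville/oscillation analysis distinguishing solutions by the number of times they cross the constant $1$ ($d\ge 2$). Your proposal therefore has to stand on its own, and its closing paragraph, which defers ``the spectral and bifurcation-theoretic input'' to \cite{Petea}, \cite{JLX}, \cite{HP}, is circular: the statement being proved \emph{is} the theorem of those papers.

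As a self-contained variational argument the proposal has genuine gaps. First, the topological index you invoke does not exist on $\mathcal{N}^+$: Krasnoselski genus is defined for closed subsets symmetric under $v\mapsto -v$ and avoiding $0$, and no nonempty set of positive functions is symmetric, so your classes $\Gamma_j$ are empty; if one uses Lusternik--Schnirelman category instead, $\mathcal{N}^+$ is contractible (it is the radial projection of the convex positive cone), so every compact subset has category $1$ and no hierarchy of $l+1$ min-max levels can be formed. Second, even with some min-max scheme on the closed cone, positivity of critical points at higher levels is not obtainable by the usual replacement $v\mapsto\lvert v\rvert$ (that device works only at the ground-state level and destroys any linking structure), and the gradient/deformation flow need not preserve the cone, so the constrained critical points need not be solutions in $C^{\infty}_+$. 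Third, a large Morse index of $v\equiv 1$ does not by itself force additional critical points; moreover the index of $E\vert_{\mathcal{N}}$ at $1$ is $l$, not $l+1$, since the Nehari constraint removes exactly the constant ($k=0$ zonal) direction, and distinctness of critical points at possibly equal levels again requires the index theory that is unavailable here. These are precisely the difficulties the ODE-based proofs of Kobayashi--Schoen and Petean are built to overcome, so the proposal as written does not establish the theorem.
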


Corollary \ref{cor:HorVert}, Theorems \ref{thm:YamabeUni}, \ref{thm:YamabeMult} then imply the following.  
\begin{proposition}\label{prop:number}
Let $H\overset{\text{isom}}{\lact}(F^k, \hat{g}) \xrightarrow{\iota} (M^n, g) \xrightarrow{\pi} (B^m, \check{g})$ be a Riemannian submersion with totally geodesic fibers.  
Suppose $g$ has constant scalar curvature $R(g)>0$ and $m, k\ge 1$.  
\begin{enumerate}
\item If $\Ric(\hat{g})\ge \frac{k-1}{k} \frac{R(g)}{n-1}\hat{g}$, then the solution $u\in \iota_*C^{\infty}_+(F)^H$ to  \eqref{YamabePDE} is unique.  
\item If $H\lact (F, \hat{g}) = \SO(k) \lact S^k(r)$ and $l(l+k-1)/r^2 < R(g)/(n-1)$, 
then there are at least $l+1$ solutions to \eqref{YamabePDE} in $\iota_*C^{\infty}_+(F)^H$.  
\item If $\Ric(\check{g})\ge \frac{m-1}{m} \frac{R(g)}{n-1}\check{g}$, then the solution $u\in \pi^*C^{\infty}_+(B)$ to  \eqref{YamabePDE} is unique.  
\item If $(B^m, \check{g})=S^m(1)$ and $l(l+m-1) < R(g)/(n-1)$, 
then there are at least $l+1$ solutions to \eqref{YamabePDE} in $\pi^*C^{\infty}_+(B)$.  
\end{enumerate}
\end{proposition}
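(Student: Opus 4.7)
The plan is to reduce each statement to a corresponding uniqueness or multiplicity result for the subcritical equation \eqref{YamabeTypePDE} on the base $(B^m,\check{g})$ or on the typical fiber $(F^k,\hat{g})$, by means of Corollary \ref{cor:HorVert}. The crucial dimensional observation is that the constants $a_n$ and $p_n$ appearing in \eqref{YamabeHor} and \eqref{YamabeVert} are dictated by $n=\dim M$, whereas $B$ and $F$ have strictly smaller dimensions $m,k\ge 1$; hence in the framework of Theorem \ref{thm:YamabeUni} and Theorem \ref{thm:YamabeMult} one always has $N=n>d$ with $d=m$ or $d=k$, so the relevant PDE is genuinely subcritical.

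For statement (1), I would argue as follows. A function $u\in\iota_*C^{\infty}_+(F)^H$ can be written uniquely as $u=\iota_*\hat{u}$ with $\hat{u}\in C^{\infty}_+(F)^H$, and by the second bullet of Corollary \ref{cor:HorVert}, $u$ solves \eqref{YamabePDE} on $M$ if and only if $\hat{u}$ solves \eqref{YamabeVert} on $(F^k,\hat{g})$. The latter has the form \eqref{YamabeTypePDE} with $N=n$, $R=R(g)$, and $d=k$; the Ricci hypothesis $\Ric(\hat{g})\ge \frac{k-1}{k}\frac{R(g)}{n-1}\hat{g}$ is exactly the assumption of Theorem \ref{thm:YamabeUni} in the $d\ge 2$ case (and degenerates to the $\lambda_1$ bound when $k=1$, which is similarly implied). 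So $\hat{u}\equiv 1$, giving uniqueness of $u$. Statement (3) is proved identically with $(B^m,\check{g})$ and $d=m$ in place of $(F^k,\hat{g})$ and $d=k$, using the first bullet of Corollary \ref{cor:HorVert}.

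For statement (2), Theorem \ref{thm:YamabeMult} applied to $(S^k(r),r^2\overset{\circ}{g}_k)$ with $N=n$ and $R=R(g)$ produces at least $l+1$ positive solutions of \eqref{YamabeVert} under the precise hypothesis $l(l+k-1)/r^2<R(g)/(n-1)$. These solutions are invariant under the cohomogeneity-one action $\SO(k)\lact S^k(r)$, hence in particular invariant under $H=\SO(k)$, so they lie in $C^{\infty}_+(F)^H$. Pushing forward by $\iota_*$ produces $l+1$ distinct solutions of \eqref{YamabePDE} in $\iota_*C^{\infty}_+(F)^H$. Statement (4) is the analogous application of Theorem \ref{thm:YamabeMult} on $(B,\check{g})=S^m(1)$ with $d=m$, pulled back via $\pi^*$.

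In truth there is no serious obstacle: the conceptual content lies entirely in Corollary \ref{cor:HorVert}, which converts the horizontal/vertical symmetry constraint into the correct subcritical scalar PDE on the lower-dimensional manifold. The only point that deserves care is bookkeeping: one must verify that the subcriticality really is strict (which holds since $m,k\ge 1$ forces $d<N$), and that the symmetry groups match (so that the solutions produced by Theorem \ref{thm:YamabeMult} actually lie in the $H$-invariant subspace and thus descend to $\iota_*C^{\infty}_+(F)^H$). Both are immediate in the setup given.
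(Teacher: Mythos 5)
Your proposal is correct and takes exactly the paper's route: the paper obtains this proposition precisely by combining Corollary \ref{cor:HorVert} with Theorems \ref{thm:YamabeUni} and \ref{thm:YamabeMult}, which is what you do. The only difference is that you spell out the bookkeeping (strict subcriticality since $m,k\ge 1$, and the matching of the $\SO(k)$-invariance with membership in $C^{\infty}_+(F)^H$) that the paper leaves implicit.
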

\noindent The metrics in Theorems \ref{Thm:irrep}, \ref{Thm:sum} have constant scalar curvature 
and are defined on the total spaces of Riemannian submersions with totally geodesic fibers.  
Applying Proposition \ref{prop:number}, 
we can study the number of constant scalar curvature metrics in their conformal classes.  

\begin{proof}[Proof of Theorem \ref{Thm:number}]
There hold: $(B, \check{g})=S^m(1)$, 
$(F, \hat{g}) = S^k(r)$, 
and $R(g(r))=m(m-1)+k(k-1)/r^2-a^2r^2$.  

We prove (2) and (3).  
$\Ric(\check{g})\ge \frac{m-1}{m}\frac{R(g(r))}{n-1}\check{g}$ is equivalent to $-(a^2/k)r^2+(k-1)/r^2\le m$.  
$l(l+m-1) < R(g(r))/(n-1)$ is equivalent to $-a^2r^2+k(k-1)/r^2>l(l+m-1)(m+k-1)-m(m-1)$.  
For a smooth function $u$ on $M$, $u\in \pi^*C^{\infty}(B)$ if and only if $u$ is constant along each fiber of $\pi$ (Proposition \ref{prop:HorVert}).   
Hence (3) and (4) in Proposition \ref{prop:number} imply (2) and (3).  
Similarly, (1) of Proposition \ref{prop:number} implies (1) of Theorem \ref{Thm:number}.  
%
\end{proof}


\begin{remark}
Some of the connection metrics that we constructed in Theorems \ref{Thm:irrep}, \ref{Thm:sum} are positive Yamabe minimizers.  
Indeed, some of the metrics in Theorems \ref{Thm:irrep}, \ref{Thm:sum} are scalar flat, 
and thus they are unique unit-volume metrics of constant scalar curvature in their conformal classes and are strictly stable with respect to the Yamabe functionals.  
Applying a slight modification of the previous result due to B\"ohm--Wang--Ziller \cite[Theorem 5.1]{BWZ}, 
we see that the connection metrics of constant scalar curvature 
close to the scalar flat ones are also Yamabe (cf.\ \cite{LPZ2}, \cite[Proposition 4]{Oto}).  
\end{remark}

\begin{remark}
The $(l+1)$ metrics of constant scalar curvature in (3) of Theorem \ref{Thm:number}
should be non-isometric to each other.  
Assume $g(r)$ is not conformally flat and $n=m+k\ge 4$ (use the Cotton tensor instead of Weyl tensor if $n=3$).  
For $i=1, 2$, let $u_i$ be a conformal factor such that $\tilde{g}_i:=u_i^{p_n-2}g(r)$ has constant scalar curvature $R(g(r))$.  
The norm of Weyl tensor satisfies $\lvert W_{\tilde{g}_i}\rvert =u_i^{2-p_n}\lvert W_{g(r)}\rvert$.  
Since $u_i$ is constant along each fiber and $W_{g(r)}$ is invariant under the fiber-transitive action of $G$ on $UE$, 
\[
\max_{UE}  \lvert W_{\tilde{g}_i}\rvert=(\min_B u_i)^{2-p_n}\max_{UE} \lvert W_{g(r)}\rvert>0.  
\]
If $\tilde{g}_1$ and $\tilde{g}_2$ are isometric to each other, 
then $\max  \lvert W_{\tilde{g}_1}\rvert=\max  \lvert W_{\tilde{g}_1}\rvert$ holds 
so that 
$\min u_1=\min u_2$.  
The $u_i$'s are obtained as solutions of ODE (see \cite{Petea}), 
and one can check that if $\min u_1=\min u_2$, then $u_1=u_2$.  
With this argument, one could see that 
the metrics in Theorem \ref{Thm:number} should be non-isometric to each other in most cases.  
However, it is difficult to rule out the possibility that some of them are isometric.  
\end{remark}

\subsubsection*{Acknowledgements}
We carried out a part of this work during the first author's two-months stay at CIMAT.  
He expresses his gratitude to the kindest hospitality of the second author and the institute.  
We thank the referees of the journal for their interesting, detailed comments that helped us improve the manuscript.  
N.\ Otoba is supported by Japan Society for the Promotion of Science under Research Fellowship for Young Scientists.  
J.\ Petean is supported by grant 220074 of Fondo Sectorial de Investigaci\'on para la Educaci\'on SEP-CONACYT.  

\end{document}